\newcommand{\bl}[1]{\textcolor{blue}{#1}}
\definecolor{mypurple}{rgb}{.4,.0,.5}
\newcommand{\prp}[1]{\textcolor{mypurple}{#1}}
\def\w{{\bf w}}
\def\y{{\bf y}}
\def\v{{\bf v}}
\def\x{{\bf x}}
\def\x{{\mathbf x}}
\def\w{{\bf w}}
\def\v{{\bf v}}
\def\x{{\bf x}}
\def\y{{\bf y}}
\def\z{{\bf z}}
\def\h{{\bf h}}
\def\be{\begin{equation}}
\def\ee{\end{equation}}
\def\ba{\left[\begin{array}}
\def\ea{\end{array}\right]}
\def\w{{\bf w}}
\def\v{{\bf v}}
\def\x{{\bf x}}
\def\y{{\bf y}}
\def\z{{\bf z}}
\def\1{{\bf 1}}
\def\W{{\bf W}}
\def\g{{\bf g}}
\def\0{{\bf 0}}
\def\htheta{\hat{\theta}}
\def\H{{\bf H}}
\def\X{{\bf X}}
\def\gammainc{\gamma_{inc}}
\def\htheta{\hat{\theta}}
\def\Sw{S_w}
\def\hw{\bar{\h}}
\def\Sw{S_w}
\def\hw{\bar{\H}}
\def\betaweak{\beta_{w}}
\def\thetaweak{\theta_{w}}
\def\hthetaweak{\hat{\theta}_{w}}
\newtheorem{theorem}{Theorem}
\newtheorem{lemma}{Lemma}
\begin{document}

\begin{singlespace}

\title {Random linear under-determined systems with block-sparse solutions -- asymptotics, large deviations, and finite dimensions 
}
\author{
\textsc{Mihailo Stojnic \footnote
{e-mail: {\tt flatoyer@gmail.com}} }}
\date{}
\maketitle

\centerline{{\bf Abstract}} \vspace*{0.1in}

In this paper we consider random linear under-determined systems with block-sparse solutions. A standard subvariant of such systems, namely, precisely the same type of systems without additional block structuring requirement, gained a lot of popularity over the last decade. This is of course in first place due to the success in mathematical characterization of an $\ell_1$ optimization technique typically used for solving such systems, initially achieved in \cite{CRT,DOnoho06CS} and later on perfected in \cite{DonohoPol,DonohoUnsigned,StojnicCSetam09,StojnicUpper10}. The success that we achieved in \cite{StojnicCSetam09,StojnicUpper10} characterizing the standard sparse solutions systems, we were then able to replicate in a sequence of papers \cite{StojnicCSetamBlock09,StojnicUpperBlock10,StojnicICASSP09block,StojnicJSTSP09} where instead of the standard $\ell_1$ optimization we utilized its an $\ell_2/\ell_1$ variant as a better fit for systems with block-sparse solutions. All of these results finally settled the so-called threshold/phase transitions phenomena (which naturally assume the asymptotic/large dimensional scenario). Here, in addition to a few novel asymptotic considerations, we also try to raise the level a bit, step a bit away from the asymptotics, and consider the finite dimensions scenarios as well.

\vspace*{0.25in} \noindent {\bf Index Terms: Linear systems of equations; block-sparse solutions;
$\ell_2/\ell_1$-optimization}.

\end{singlespace}

\section{Introduction}
\label{sec:back}

We will start by introducing key mathematical structures that we will need in the rest of the paper. However, we do emphasize right here at the beginning that throughout the presentation we will often assume a high level of familiarity with many well known concepts (we will however try to maintain as much consistency with \cite{StojnicCSetamBlock09,StojnicUpperBlock10,StojnicICASSP09block,StojnicJSTSP09} as possible so that one can consult these earlier works and results obtained therein without much need for additional adjustments).

Let system matrix $A$ be an $M\times N$ ($M\leq N$) dimensional matrix with real entries and let $\tilde{\x}$ be an $N$ dimensional vector that also has real entries. Additionally, let $\tilde{\x}$ have no more than $K$ nonzero entries (one usually calls such a vector $K$-sparse). Then one forms the product of $A$ and $\tilde{\x}$ to obtain $\y$
\begin{equation}
\y=A\tilde{\x}. \label{eq:defy}
\end{equation}
The problems of interest in this paper (and in general in linear systems known to have sparse solutions) are essentially an inverted version of (\ref{eq:defy}). Namely, given $A$ and $\y$ from (\ref{eq:defy}), can one then determine $\tilde{\x}$? Mathematically, one asks for the $k$ sparse solution of
\begin{equation}
A\x=\y, \label{eq:system}
\end{equation}
knowing of course (based on (\ref{eq:defy})) that such a solution exists (for $K<M/2$ it is in fact unique; we also additionally assume that there is no $\x$ in (\ref{eq:system}) that is less than $K$ sparse). Alternatively, one often rewrites the problem described above in the following way
\begin{eqnarray}
\mbox{min} & & \|\x\|_{0}\nonumber \\
\mbox{subject to} & & A\x=\y, \label{eq:l0}
\end{eqnarray}
where $\|\x\|_{0}$ is what is typically called $\ell_0$ norm of vector $\x$ (for all practical purposes needed here, we will think of $\|\x\|_{0}$ as being a number that is equal to the number of nonzero entries of $\x$.


Now that we described the problem one then wonders how it can be solved. Quite a few very successful algorithms have been developed over last several decades (see, e.g. \cite{JATGomp,JAT,NeVe07,DTDSomp,NT08,DaiMil08,DonMalMon09}). In our view however, the most important one is the following linear programming relaxation of (\ref{eq:l0}), called $\ell_1$-optimization
\begin{eqnarray}
\mbox{min} & & \|\x\|_{1}\nonumber \\
\mbox{subject to} & & A\x=\y. \label{eq:l1}
\end{eqnarray}
Its importance is of course rooted in its excellent performance characteristics. However, we do believe that its performance characterizations initially done in \cite{CRT,DOnoho06CS} and later on perfected in \cite{DonohoPol,DonohoUnsigned,StojnicCSetam09,StojnicUpper10} also contributed to a large degree to its an overall popularity. In fact, we believe that \cite{CRT,DOnoho06CS} in particular generated a substantial portion of the interest in linear systems over the last decade. As the main concern of this paper is a bit more specific type of linear systems we will stop short of reviewing in detail results that relate to (\ref{eq:system}) and to (\ref{eq:l1}). Instead we will refer to \cite{DonohoPol,DonohoUnsigned,StojnicCSetam09,StojnicUpper10} and in particular to \cite{StojnicReDirChall13} where a whole lot more can be found.

As mentioned above, in this paper, we will be interested in a bit more specific version of the problem from (\ref{eq:system}). Namely, we will consider the linear systems with solutions that are block-sparse (more on systems with these types of solutions and their potential applications and recovery algorithms can be found in a series of recent references, see e.g. \cite{EldBol09,EKB09,SPH,FHicassp,EMsub,BCDH08,StojnicICASSP09block,StojnicJSTSP09,GaZhMa09,CeInHeBa09} as well as e.g. \cite{ZeGoAd09,ZeWaSeGoAd08,TGS05,BWDSB05,CH06,CREKD,MEldar,Temlyakov04,VPH,BerFri09,EldRau09,BluDav09,NegWai09} and many
references therein for a related problem of recovering jointly sparse vectors). To facilitate the description of the block-sparse vectors we will assume that integers $N$ and $d$ are chosen such that $n=\frac{N}{d}$ is an
integer and it represents the total number of blocks that $\x$
consists of. Clearly $d$ is the length of each block (typically we will assume $d\geq 2$ to distinguish between the standard version of the problem from (\ref{eq:system})). Furthermore,
we will assume that $\X_i=\x_{(i-1)d+1:id}, 1\leq i\leq n$, are the $n$ blocks of $\x$. One can sometimes also assume that $m=\frac{M}{d}$ is an integer as well; however, we will specifically emphasize if/when we do that.
We will call any vector $\x$ k-block-sparse if its at most
$k=\frac{K}{d}$ blocks $\X_i$ are non-zero (non-zero block is a block
that is not a zero block; zero block is a block that has all
elements equal to zero). Since $k$-block-sparse signals are
$K$-sparse one could then use (\ref{eq:l1}) to recover the solution
of (\ref{eq:system}). Moreover, all of known results related to performance characterization of (\ref{eq:l1}) would translate immediately as well. However, using (\ref{eq:l1}) to recover block-sparse vectors would utilized their block structure in no way. Since this knowledge is a priori available one would be tempted to believe that it could be somehow utilized in the design of the recovery algorithms so that they outperform the standard $\ell_1$ from (\ref{eq:l1}). This was, of course observed, long time ago and there are of course many ways how one can attempt to exploit the block-sparse structure. Here we will focus on the following polynomial-time algorithm (essentially a
combination of $\ell_2$ and $\ell_1$ optimizations) that was considered in \cite{SPH} (see also e.g. \cite{ZeWaSeGoAd08,ZeGoAd09,BCDH08,EKB09,BerFri09})
\begin{eqnarray}
\mbox{min} & & \sum_{i=1}^{n}\|\x_{(i-1)d+1:id}\|_2\nonumber \\
\mbox{subject to} & & A\x=\y. \label{eq:l2l1}\vspace{-.1in}
\end{eqnarray}
Various aspects of the performance analysis of the optimization problem from (\ref{eq:l2l1}) will be the main subject of the remaining sections of this paper. We will start by reviewing a few known asymptotic results related to the performance characterization of (\ref{eq:l2l1}). We will then present a few novel asymptotic considerations and will finish things off by providing a collection of results in a non-asymptotic setup.

\section{Asymptotics}
\label{sec:asymptotics}

In \cite{SPH} we started studying (\ref{eq:l2l1}) as a tool for solving (\ref{eq:system}). We conducted a serious of numerical experiments as well as a preliminary theoretical analysis of certain properties of (\ref{eq:l2l1}). Through numerical experiments we
demonstrated that as $d$ grows
the algorithm in (\ref{eq:l2l1}) significantly outperforms the
standard $\ell_1$. Of course, as hinted above, just based on the structure of (\ref{eq:l2l1}) it was to be expected that it will be working better than just plain simple $\ell_1$ from (\ref{eq:l1}). What was perhaps a bit surprising (and very welcomed) was that the improvement over $\ell_1$ was rather substantial. In fact, this was particularly true in the so-called under-sampled scenario (i.e. when $m\ll n$) where for fairly moderate values of block-length $d$ (\ref{eq:l2l1}) was already approaching the theoretically limiting level of performance.

Providing a rigorous mathematical justification for these observations at the time was completely out of reach. Nonetheless, we were able to show the following: let $A$ be an $M\times N$ matrix with a basis of null-space
comprised of i.i.d. Gaussian elements; if
$\alpha=\frac{M}{N}\rightarrow 1$ as $N\rightarrow \infty$ then there is a constant $d$ such
that all $k$-block-sparse signals $\x$ with sparsity $K\leq \beta N,
\beta\rightarrow \frac{1}{2}$, can be recovered with overwhelming
probability by solving (\ref{eq:l2l1}).
The precise relation between
$d$ and how fast $\alpha\longrightarrow 1$ and $\beta\longrightarrow
\frac{1}{2}$ was quantified in \cite{SPH} as well. The result was obtained for a fairly narrow range of problem parameters but was in its nature optimal. We of course then recognized the overall potential of (\ref{eq:l2l1}) and undertook a thorough and systematic study of its performance characteristics. In \cite{StojnicICASSP09block,StojnicJSTSP09} we extended the results from
\cite{SPH} and obtained the values of the recoverable block-sparsity for any
$\alpha$, i.e. for $0\leq \alpha \leq 1$. More precisely, for any
given constant $0\leq \alpha \leq 1$ we in \cite{StojnicICASSP09block,StojnicJSTSP09} determined a constant
$\beta=\frac{K}{N}$ such that for a sufficiently large $d$ (\ref{eq:l2l1})
with overwhelming probability
recovers any $k$-block-sparse vector of sparsity less than $K$
(in this paper we, as usual, under overwhelming probability assume
a probability that is no more than a number exponentially decaying in $N$ away from $1$).


For algorithms that exhibit the so-called phase-transition (PT) phenomenon, for any given constant $\alpha\leq 1$ there is a maximum
allowable value of $\beta$ such that for \emph{any} given $K$-sparse $\x$ in (\ref{eq:system}) the solution that the algorithm offers
is with overwhelming probability exactly that given $K$-sparse $\x$. To be a bit more precise and in an alignment with some of our earlier works, we should add that this type of phase transitions is often called the \emph{strong} phase transition (the \emph{strong} PT) and the value of
$\beta$ that we just described is typically referred to as the \emph{strong threshold} (see
\cite{DonohoPol,StojnicCSetamBlock09}). These threshold values are essentially the points (proportions of the system dimensions) where the algorithms (in our case here (\ref{eq:l2l1})) exhibit the phase-transition phenomenon. In a bit less formal language, the phase-transition phenomenon essentially means that if the problem dimensions are such that the pair $(\alpha,\beta)$ is below the so called phase-transition curve (the PT curve) then the algorithm (here (\ref{eq:l2l1})) solves the problem (here (\ref{eq:system})); otherwise it fails. The ultimate goal of an asymptotic analysis of any algorithm with phase transition is to determine the PT curve.

We should also mention that the above requirement is, from a practical point of view, a bit restrictive. Namely, insisting that the algorithm succeeds for \emph{any} given $K$-sparse $\x$ is a bit too much to ask for if one only cares about a ``typical" level of performance. To characterize a bit more typical levels of performance one often relaxes the above PT description. Namely, for any given constant
$\alpha\leq 1$ and \emph{any} given $\x$ with a given fixed location and a given fixed directions of non-zero blocks
there will be a maximum allowable value of $\beta$ such that
(\ref{eq:l2l1}) finds that given $\x$ in (\ref{eq:system}) with overwhelming
probability. Such a value of
$\beta$, which we will here denote by $\beta_{w}$, is typically called the \emph{weak threshold} and the resulting curve the \emph{weak} phase-transition curve (see, e.g. \cite{StojnicICASSP09,StojnicCSetam09}). More important than the name itself is that this type of phase-transition is designed to capture a typical performance in a better way. Indeed, when one needs (\ref{eq:system}) solved, it typically wants it solved for an $\x$ or for a set of $\x$ but quite likely not for every single $\x$. In scenarios when this is indeed true the above weak phase-transition curve is highly likely to be more useful.

Now, returning back to what was done in \cite{StojnicICASSP09block,StojnicJSTSP09}, we should emphasize another subtle point. Namely, \cite{StojnicICASSP09block,StojnicJSTSP09} provided fairly sharp strong threshold values but they had done so in a so to say block asymptotic sense, i.e. the analyses presented in \cite{StojnicICASSP09block,StojnicJSTSP09} assumed fairly large values of the block-length $d$. As such they then provided an ultimate performance limit of $\ell_2/\ell_1$-optimization rather than its performance characterization as a function of a particular fixed block-length.

Finally in \cite{StojnicCSetamBlock09} we were able to massively extended the results of \cite{StojnicICASSP09block,StojnicJSTSP09} while moving away from the block-asymptotic regime. We essentially introduced a novel probabilistic framework for performance characterization of (\ref{eq:l2l1}) through which we were finally able to view block-length as a parameter of the system (some of the key components of the framework were actually introduced in \cite{StojnicCSetam09}). Through the framework we were able to lower-bound $\beta_w$. Moreover, the obtained lower bounds were in an excellent agreement with the values obtained for $\beta_w$ through numerical simulations. In \cite{StojnicUpperBlock10} we then showed that the lower bounds on $\beta_w$ obtained in \cite{StojnicCSetamBlock09} are actually exact. We below recall on a theorem that essentially summarizes the results obtained in \cite{StojnicCSetamBlock09,StojnicUpperBlock10} and effectively establishes for any $0<\alpha\leq 1$ the exact value of $\beta_w$ for which (\ref{eq:l2l1}) finds the $k$-block-sparse $\x$ from (\ref{eq:system}).

\begin{theorem}(\cite{StojnicCSetamBlock09,StojnicUpperBlock10} Exact weak threshold; block-sparse $\x$)
Let $A$ be an $M\times N$ matrix in (\ref{eq:system})
with i.i.d. standard normal components. Let
the unknown $\x$ in (\ref{eq:system}) be $k$-block-sparse with the length of its blocks $d$. Further, let the location and the directions of nonzero blocks of $\x$ be arbitrarily chosen but fixed.
Let $k,m,n$ be large
and let $\alpha=\frac{m}{n}$ and $\betaweak=\frac{k}{n}$ be constants
independent of $m$ and $n$. Let $\gammainc(\cdot,\cdot)$ and $\gammainc^{-1}(\cdot,\cdot)$ be the incomplete gamma function and its inverse, respectively. Further,
let all $\epsilon$'s below be arbitrarily small constants.

\begin{enumerate}
\item Let $\hthetaweak$, ($\betaweak\leq \hthetaweak\leq 1$) be the solution of
\begin{equation}
(1-\epsilon_1^{(c)})(1-\betaweak)\frac{\frac{\sqrt{2}\Gamma(\frac{d+1}{2})}{\Gamma(\frac{d}{2})}
\left (1-\gammainc(\gammainc^{-1}(\frac{1-\thetaweak}{1-\betaweak},\frac{d}{2}),\frac{d+1}{2})\right )}{\thetaweak}-\sqrt{2\gammainc^{-1}(\frac{(1+\epsilon_1^{(c)})(1-\thetaweak)}{1-\betaweak},\frac{d}{2})}=0
.\label{eq:thmweaktheta}
\end{equation}
If $\alpha$ and $\betaweak$ further satisfy
\begin{multline}
\alpha d>(1-\betaweak)\frac{2\Gamma(\frac{d+2}{2})}{\Gamma(\frac{d}{2})}
\left (1-\gammainc(\gammainc^{-1}(\frac{1-\hthetaweak}{1-\betaweak},\frac{d}{2}),\frac{d+2}{2})\right )
+\betaweak d\\-\frac{\left ((1-\betaweak)\frac{\sqrt{2}\Gamma(\frac{d+1}{2})}{\Gamma(\frac{d}{2})}
(1-\gammainc(\gammainc^{-1}(\frac{1-\hthetaweak}{1-\betaweak},\frac{d}{2}),\frac{d+1}{2}))\right ) ^2}{\hthetaweak}\label{eq:thmweakalpha}
\end{multline}
then with overwhelming probability the solution of (\ref{eq:l2l1}) is the $k$-block-sparse $\x$ from (\ref{eq:system}).

\item Let $\htheta_w$, ($\beta_w\leq \htheta_w\leq 1$) be the solution of
\begin{equation}
(1+\epsilon_2^{(c)})(1-\betaweak)\frac{\frac{\sqrt{2}\Gamma(\frac{d+1}{2})}{\Gamma(\frac{d}{2})}
\left (1-\gammainc(\gammainc^{-1}(\frac{1-\thetaweak}{1-\betaweak},\frac{d}{2}),\frac{d+1}{2})\right )}{\thetaweak}-\sqrt{2\gammainc^{-1}(\frac{(1-\epsilon_2^{(c)})(1-\thetaweak)}{1-\betaweak},\frac{d}{2})}=0
.\label{eq:thmweaktheta1}
\end{equation}
If $\alpha$ and $\betaweak$ further satisfy
\begin{multline}
\alpha d<\frac{1}{(1+\epsilon_1^{(m)})^2}((1-\epsilon_1^{(g)}) (1-\betaweak)\frac{2\Gamma(\frac{d+2}{2})}{\Gamma(\frac{d}{2})}
\left (1-\gammainc(\gammainc^{-1}(\frac{1-\hthetaweak}{1-\betaweak},\frac{d}{2}),\frac{d+2}{2})\right )
+\betaweak d\\-\frac{\left ((1-\betaweak)\frac{\sqrt{2}\Gamma(\frac{d+1}{2})}{\Gamma(\frac{d}{2})}
(1-\gammainc(\gammainc^{-1}(\frac{1-\hthetaweak}{1-\betaweak},\frac{d}{2}),\frac{d+1}{2}))\right ) ^2}{\hthetaweak (1+\epsilon_3^{(g)})^{-2}} )\label{eq:thmweakalpha1}
\end{multline}
then with overwhelming probability there will be a $k$-block-sparse $\x$ (from a set of $\x$'s with fixed locations and directions of nonzero blocks) that satisfies (\ref{eq:system}) and is \textbf{not} the solution of (\ref{eq:l2l1}).
\end{enumerate}
\label{thm:thmweakthrblock}
\end{theorem}
\begin{proof}
The first part was established in \cite{StojnicCSetamBlock09}. The second part was established in \cite{StojnicUpperBlock10}.
\end{proof}

To give a little bit of flavor as to what is actually proven in the above theorem we in Figure \ref{fig:weak} show the theoretical thresholds one obtains based on the above theorem. Moreover, in \cite{StojnicCSetamBlock09,StojnicUpperBlock10} we showed that the theoretical prediction given in Figure \ref{fig:weak} (and obtained in asymptotic infinite-dimensional scenario) are also in a solid agreement with what the finite dimensional numerical simulations can produce (in fact the agreement is already fairly good for the block-length as small as $100$).

\begin{figure}[htb]
\centering
\centerline{\epsfig{figure=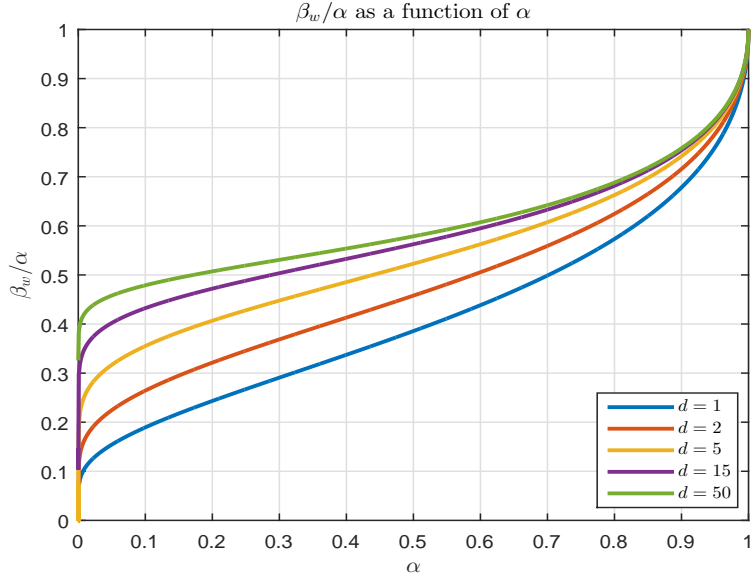,width=11.5cm,height=8cm}}
\caption{Block-sparse weak PT curves}
\label{fig:weak}
\end{figure}


\section{Large deviations}
\label{sec:ldp}

In the previous section we recalled on a collection of results that deal with the so-called phase-transition phenomenon. In this section we will introduce a somewhat novel concept that effectively resembles what is in probability theory known as the large deviation principle (LDP). We start by recalling on a couple of results that we established in \cite{StojnicCSetamBlock09}.

For the simplicity and clarity of the exposition and without loss of generality we will assume that the blocks $\X_{1},\X_{2},\dots,\X_{n-k}$ of $\x$ are equal to zero and that that vectors $\X_{n-k+1},\X_{n-k+2},\dots,\X_n$ have fixed directions. Furthermore, since all probability distributions of interest will be rotationally invariant we will later assume that $\X_i=(\|\X_i\|_2,0,0,\dots,0),n-k+1\leq i \leq n$.  The following was proved in \cite{StojnicCSetamBlock09}.
\begin{theorem}(\cite{StojnicCSetamBlock09} Nonzero blocks of $\x$ have fixed directions and location)
Assume that an $M\times dn$ measurement matrix $A$ is given. Let $\x$
be a $k$-block-sparse vector. Also let $\X_1=\X_2=\dots=\X_{n-k}=0$. Let the directions of vectors $\X_{n-k+1},\X_{n-k+2},\dots,\X_n$ be fixed. Further, assume that $\y=A\x$ and that $\w$ is
a $dn\times 1$ vector. Then (\ref{eq:l2l1}) will
produce the solution of (\ref{eq:system}) if
\begin{equation}
(\forall \w\in \textbf{R}^{dn} | A\w=0) \quad  -\sum_{i=n-k+1}^n \frac{\X_i^T\W_i}{\|\X_i\|_2}<\sum_{i=1}^{n-k}\|\W_{i}\|_2.
\label{eq:thmeqgenweak}
\end{equation}\label{thm:thmgenweak}
\end{theorem}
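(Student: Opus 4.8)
The statement is a standard optimality/duality characterization for the $\ell_2/\ell_1$ program, and the natural route is the classical ``null-space'' argument: $\x$ is the unique minimizer of (\ref{eq:l2l1}) if and only if perturbing it by any nonzero element $\w$ of $\mathrm{null}(A)$ strictly increases the objective. So the plan is to take an arbitrary $\w\in\mathbf{R}^{dn}$ with $A\w=0$ and compare $f(\x)=\sum_{i=1}^n\|\X_i\|_2$ with $f(\x+\w)=\sum_{i=1}^n\|\X_i+\W_i\|_2$, splitting the sum into the zero blocks $i\le n-k$ (where $\X_i=0$) and the nonzero blocks $i\ge n-k+1$.

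First I would handle the zero blocks: there $\|\X_i+\W_i\|_2-\|\X_i\|_2=\|\W_i\|_2$ exactly, contributing $\sum_{i=1}^{n-k}\|\W_i\|_2$ to the increase. Next, for each nonzero block I would lower-bound $\|\X_i+\W_i\|_2$ using convexity of the Euclidean norm, i.e. the subgradient inequality $\|\X_i+\W_i\|_2\ge \|\X_i\|_2+\frac{\X_i^T\W_i}{\|\X_i\|_2}$ (equivalently Cauchy--Schwarz on the inner product with the unit vector $\X_i/\|\X_i\|_2$). Summing, $f(\x+\w)-f(\x)\ge \sum_{i=1}^{n-k}\|\W_i\|_2+\sum_{i=n-k+1}^n\frac{\X_i^T\W_i}{\|\X_i\|_2}$. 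Under hypothesis (\ref{eq:thmeqgenweak}) the right-hand side is strictly positive for every nonzero feasible $\w$, hence $f(\x+\w)>f(\x)$; since every competitor feasible point for (\ref{eq:l2l1}) is of the form $\x+\w$ with $A\w=0$, this shows $\x$ is the strict minimizer, which is exactly the claim.

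One subtlety to address carefully is the case $\W_i=0$ for some nonzero block (the subgradient inequality still holds, with the term contributing $0$) and, more importantly, the case where for some nonzero block $\X_i+\W_i=0$ while $\X_i\neq 0$: here the inequality I used is still valid (it reads $0\ge \|\X_i\|_2+\X_i^T\W_i/\|\X_i\|_2=\|\X_i\|_2-\|\X_i\|_2=0$), so no special handling is needed, but I would remark on it to keep the argument airtight. The only genuinely delicate point is the strictness: I need the inequality in (\ref{eq:thmeqgenweak}) to be strict and to hold for all $\w\neq 0$ in the null space, which then propagates to $f(\x+\w)-f(\x)>0$; I would note that $\w=0$ is excluded since it gives the trivial $0=0$, and that (\ref{eq:thmeqgenweak}) as stated already quantifies over all such $\w$.

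**Expected main obstacle.** There is essentially no hard obstacle here — this is a soft convexity argument — so the ``main'' point is simply bookkeeping: making sure the decomposition into zero and nonzero blocks is exhaustive, that the subgradient inequality is applied with the correct normalization matching the objective $\sum\|\X_i\|_2$, and that strictness is tracked so that the conclusion is uniqueness of $\x$ as the solution of (\ref{eq:l2l1}), not merely that $\x$ attains the minimum. (If one instead wanted the converse direction, one would have to exhibit a bad $\w$, which is the genuinely nontrivial half, but the theorem as stated only asks for the sufficient condition.)
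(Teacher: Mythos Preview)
Your argument is correct and is precisely the standard null-space/subgradient argument for this type of result. Note, however, that the present paper does not actually give a proof of this theorem: it is quoted verbatim as a result from \cite{StojnicCSetamBlock09} and used as a starting point for the large-deviation analysis. The argument you outline (splitting into zero and nonzero blocks, using $\|\X_i+\W_i\|_2\ge \|\X_i\|_2+\X_i^T\W_i/\|\X_i\|_2$ on the nonzero blocks, and summing) is exactly the proof one finds in that reference, so there is nothing to add.
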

To facilitate the exposition we set
\begin{equation}
\Sw'=\{\w\in S^{dn-1}| \quad -\sum_{i=n-k+1}^n \frac{\X_i^T\W_i}{\|\X_i\|_2}<\sum_{i=1}^{n-k}\|\W_{i}\|_2\}.\label{eq:defSwpr}
\end{equation}

\subsection{Upper tail}
\label{sec:uppertail}

Assuming as earlier that the elements of $A$ are i.i.d. standard normals, our object of interest in this section will be the following probability
\begin{equation}
P_{err}= P(\min_{\w\in S_w'}\|A\w\|_2\leq 0)=P(\max_{\w\in S_w'}\min_{\|\y\|_2=1}(\y^T A\w )\geq 0).
\label{eq:ldpprob}
\end{equation}
Clearly, $P_{err}$ is the probability that (\ref{eq:l2l1}) fails to produce the solution of (\ref{eq:system}). Using the Chernoff bound we then get
\begin{equation}
P_{err}=P(\max_{\w\in S_w'}\min_{\|\y\|_2=1}(\y^T A\w )\geq 0)\leq E e^{c_3\max_{\w\in S_w'}\min_{\|\y\|_2=1}(\y^T A\w )}=E \max_{\w\in S_w'}\min_{\|\y\|_2=1}e^{(-c_3\y^T A\w )},
\label{eq:ldpprob1}
\end{equation}
where we assume $c_3\geq 0$ and note that $A$ and $-A$ have the same distribution. We will then rely on the following lemma utilized in \cite{StojnicMoreSophHopBnds10} (as discussed in \cite{StojnicMoreSophHopBnds10}, the lemma follows from a comparison result from \cite{Gordon85}).
\begin{lemma}(\cite{Gordon85,StojnicMoreSophHopBnds10})
Let $A$ be an $M\times N$ matrix with i.i.d. standard normal components. Let $\g$ and $\h$ be $M\times 1$ and $N\times 1$ vectors, respectively, with i.i.d. standard normal components. Also, let $g$ be a standard normal random variable and let $c_3$ be a positive constant. Then
\begin{equation}
E(\max_{\w\in S_w'}\min_{\|\y\|_2=1}e^{-c_3(\y^T A\w + g)})\leq E(\max_{\w\in S_w'}\min_{\|\y\|_2=1}e^{-c_3(\g^T\y+\h^T\w)}).\label{eq:negexplemma}
\end{equation}\label{lemma:negexplemma}
\end{lemma}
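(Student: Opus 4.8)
The plan is to recognize Lemma \ref{lemma:negexplemma} as a direct instance of Gordon's Gaussian comparison inequality (the minimax/Slepian-type bound from \cite{Gordon85}), so the whole argument reduces to packaging that classical result in the form stated here. First I would recall the relevant Gordon comparison principle: if one has two centered Gaussian processes $X_{\w,\y}$ and $Y_{\w,\y}$ indexed by $\w$ in a set and $\y$ on the sphere, with $E X_{\w,\y}^2 = E Y_{\w,\y}^2$ for all indices, and with the ``distance'' comparison $E(X_{\w,\y}-X_{\w',\y'})^2 \leq E(Y_{\w,\y}-Y_{\w',\y'})^2$ whenever $\w=\w'$ and $E(X_{\w,\y}-X_{\w',\y'})^2 \geq E(Y_{\w,\y}-Y_{\w',\y'})^2$ otherwise, then $E \max_{\w}\min_{\y} X_{\w,\y} \leq E \max_{\w}\min_{\y} Y_{\w,\y}$, and — in the sharpened monotone-function form due to Gordon and exploited in \cite{StojnicMoreSophHopBnds10} — the inequality persists after applying any nonincreasing function followed by expectation, which is exactly what converts a bound on $\max\min$ of the processes into a bound on $E\max_{\w}\min_{\y} e^{-c_3(\cdot)}$.

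Next I would identify the two processes. On the left, take $X_{\w,\y} = \y^T A \w + g$ where $A$ has i.i.d.\ standard normal entries and $g$ is an independent standard normal; on the right, take $Y_{\w,\y} = \g^T\y + \h^T\w$ with $\g,\h$ independent standard normal vectors. A short covariance computation gives $E X_{\w,\y}^2 = \|\y\|_2^2\|\w\|_2^2 + 1$ and, since $\w\in S^{dn-1}$ and $\|\y\|_2=1$, this equals $1+1=2$; likewise $E Y_{\w,\y}^2 = \|\y\|_2^2 + \|\w\|_2^2 = 2$, so the equal-variance hypothesis holds on the index set in play. Then I would compute the increment variances: for $X$, $E(X_{\w,\y}-X_{\w',\y'})^2 = \|\y\|_2^2\|\w\|_2^2 + \|\y'\|_2^2\|\w'\|_2^2 - 2(\y^T\y')(\w^T\w')$, while for $Y$ it is $\|\y-\y'\|_2^2 + \|\w-\w'\|_2^2 = 2 + 2 - 2\y^T\y' - 2\w^T\w'$ using unit norms. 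When $\w=\w'$ these reduce to $2 - 2\y^T\y'$ for both, so equality holds; when $\w\neq\w'$ one checks $2 - 2(\y^T\y')(\w^T\w') \geq 2 - 2\y^T\y' - 2\w^T\w' + (\text{something} \geq 0)$, i.e.\ the sign condition needed for Gordon's theorem, which follows from $(1-\y^T\y')(1-\w^T\w')\geq 0$ since each factor is nonnegative on the sphere. This verifies precisely the hypotheses of the comparison lemma.

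Having matched the processes to Gordon's setup, the conclusion $E\max_{\w\in S_w'}\min_{\|\y\|_2=1} e^{-c_3(X_{\w,\y})} \leq E\max_{\w\in S_w'}\min_{\|\y\|_2=1} e^{-c_3(Y_{\w,\y})}$ is immediate from the monotone-function strengthening, because $t\mapsto e^{-c_3 t}$ is nonincreasing for $c_3\geq 0$; writing out $X$ and $Y$ gives exactly \eqref{eq:negexplemma}. The main obstacle — really the only nontrivial point — is making sure the variance and increment-variance conditions are checked on the \emph{correct} index set (only $\w$ ranging over the unit sphere and $\y$ over the unit sphere, not arbitrary vectors), since it is precisely the normalization $\|\w\|_2=\|\y\|_2=1$ that makes both the equal-variance and the sign conditions work out; away from the sphere neither would hold. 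I would therefore state explicitly at the outset that $S_w' \subseteq S^{dn-1}$ and $\|\y\|_2 = 1$, so that all the covariance identities above are literally equalities rather than inequalities, and then cite \cite{Gordon85} (in the form used in \cite{StojnicMoreSophHopBnds10}) for the comparison itself rather than reproving it.
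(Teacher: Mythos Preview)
Your approach is exactly the one the paper intends: the paper does not give a proof at all but simply cites \cite{Gordon85} and \cite{StojnicMoreSophHopBnds10}, so spelling out the Gordon comparison with $X_{\w,\y}=\y^TA\w+g$ and $Y_{\w,\y}=\g^T\y+\h^T\w$ and then invoking the monotone-function version with $t\mapsto e^{-c_3 t}$ is precisely what is meant.

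There is, however, a slip in your statement and verification of the increment-variance hypotheses. On the unit spheres one has
\[
E(X_{\w,\y}-X_{\w',\y'})^2=2-2(\y^T\y')(\w^T\w'),\qquad
E(Y_{\w,\y}-Y_{\w',\y'})^2=4-2\y^T\y'-2\w^T\w',
\]
so the difference is $-2(1-\y^T\y')(1-\w^T\w')\le 0$; that is, for $\w\neq\w'$ the $X$-increments are \emph{smaller}, not larger, than the $Y$-increments (equivalently $EX_{\w,\y}X_{\w',\y'}\ge EY_{\w,\y}Y_{\w',\y'}$). The Gordon hypotheses you need are therefore the reverse of what you wrote: $E(X-X')^2\ge E(Y-Y')^2$ when $\w=\w'$ (here with equality) and $E(X-X')^2\le E(Y-Y')^2$ when $\w\neq\w'$. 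With those correct directions, Gordon's theorem yields $\min_{\w}\max_{\y}X_{\w,\y}$ stochastically dominating $\min_{\w}\max_{\y}Y_{\w,\y}$, and since $\max_{\w}\min_{\y}e^{-c_3 Z_{\w,\y}}=e^{-c_3\min_{\w}\max_{\y}Z_{\w,\y}}$, the inequality \eqref{eq:negexplemma} follows. Your key observation $(1-\y^T\y')(1-\w^T\w')\ge 0$ is exactly the right ingredient; it just feeds into the hypotheses with the opposite orientation from the one you stated.
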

Transforming (\ref{eq:ldpprob}) a bit we obtain
\begin{equation}
P_{err}\leq E \max_{\w\in S_w'}\min_{\|\y\|_2=1}e^{-c_3\y^T A\w }=e^{-\frac{c_3^2}{2}}E e^{c_3g}E\max_{\w\in S_w'}\min_{\|\y\|_2=1}e^{-c_3\y^T A\w}
=e^{-\frac{c_3^2}{2}}E\max_{\w\in S_w'}\min_{\|\y\|_2=1}e^{-c_3(\y^T A\w+g)}.
\label{eq:ldpprob2}
\end{equation}
Now a combination of (\ref{eq:negexplemma}) and  (\ref{eq:ldpprob2}) gives
\begin{multline}
P_{err}\leq e^{-\frac{c_3^2}{2}}E\max_{\w\in S_w'}\min_{\|\y\|_2=1}e^{-c_3(\y^T A\w+g)}\leq
e^{-\frac{c_3^2}{2}}E\max_{\w\in S_w'}\min_{\|\y\|_2=1}e^{-c_3(\g^T\y+\h^T\w)}\\
=e^{-\frac{c_3^2}{2}}Ee^{-c_3\|\g\|_2}E\max_{\w\in S_w'}e^{-c_3\h^T\w}=e^{-\frac{c_3^2}{2}}Ee^{-c_3\|\g\|_2}Ee^{c_3\max_{\w\in S_w'}\h^T\w}
=e^{-\frac{c_3^2}{2}}Ee^{-c_3\|\g\|_2}Ee^{c_3w(\h,S_w')},
\label{eq:ldpprob3}
\end{multline}
where
\begin{equation}
w(\h,\Sw')=\max_{\w\in \Sw'} (\h^T\w) \label{eq:widthdefSwpr}
\end{equation}
and we also note that $\h$ and $-\h$ have the same distribution. We now set
\begin{equation}
\Sw=\{\w\in S^{dn-1}| \quad -\sum_{i=n-k+1}^n \w_{(i-1)d+1}<\sum_{i=1}^{n-k}\|\W_{i}\|_2\}\label{eq:defSw}
\end{equation}
and
\begin{equation}
w(\h,\Sw)=\max_{\w\in \Sw} (\h^T\w). \label{eq:widthdefSw}
\end{equation}
Similarly to what was argued in \cite{StojnicCSetamBlock09} we have
\begin{equation}
P_{err}\leq e^{-\frac{c_3^2}{2}}Ee^{-c_3\|\g\|_2}Ee^{c_3w(\h,S_w')}=e^{-\frac{c_3^2}{2}}Ee^{-c_3\|\g\|_2}Ee^{c_3w(\h,S_w)}.
\label{eq:ldpprob4}
\end{equation}
For a moment we will now step away from $P_{err}$ and will instead focus on $w(\h,S_w)$. As in \cite{StojnicCSetamBlock09} we have
\begin{equation}
w(\h,\Sw)=\max_{\w\in \Sw} (\h^T\w)=\max_{\w\in \Sw} (\sum_{i=n-k+1}^n \h_{(i-1)d+1}\w_{(i-1)d+1}+
\sum_{i=n-k+1}^n\|\H_i^*\|_2\|\W_i^*\|_2+\sum_{i=1}^{n-k}\|\H_i\|_2\|\W_i\|_2),\label{eq:workww0}
\end{equation}
where
\begin{eqnarray}
  \H_i   &=& (\h_{(i-1)d + 1}, \h_{(i-1)d + 2}, \ldots, \h_{id})^T, i = 1, 2, \ldots, n-k \nonumber \\
  \H_i^* &=& (\h_{(i-1)d + 2}, \h_{(i-1)d + 3}, \ldots, \h_{id})^T, i = n-k+1, n-k+2, \ldots, n \nonumber \\
  \W_i^* &=& (\w_{(i-1)d + 2}, \w_{(i-1)d + 3}, \ldots, \w_{id})^T, i = n-k+1, n-k+2, \ldots, n.\label{eq:workww00}
\end{eqnarray}
Set
\begin{multline}
\hw=(\|\H_1\|_2,\|\H_2\|_2,\dots,\|\H_{n-k}\|_2,
-\h_{(n-k)d+1},-\h_{(n-k+1)d+1},-\h_{(n-k+2)d+1},\dots,-\h_{(n-1)d+1},\\
\|\H_{n-k+1}^*\|_2,\|\H_{n-k+2}^*\|_2,\dots,\|\H_{n}^*\|_2)^T.\label{eq:defhweak}
\end{multline}
Then one can simplify (\ref{eq:workww0}) in the following way
\begin{eqnarray}
w(\h,\Sw) = \max_{\bar{\y}\in R^{n+k}} & &  \sum_{i=1}^{n+k} \hw_i \bar{\y}_i\nonumber \\
\mbox{subject to} &  & \bar{\y}_i\geq 0, 0\leq i\leq n-k,n+1\leq i\leq n+k\nonumber \\
& & \sum_{i=n-k+1}^n\bar{\y}_i\geq \sum_{i=1}^{n-k} \bar{\y}_i \nonumber \\
& & \sum_{i=1}^{n+k}\bar{\y}_i^2\leq 1\label{eq:workww2}
\end{eqnarray}
where $\hw_i$ is the $i$-th element of $\hw$ and $\bar{\y}_i$ is the $i$-th element of $\bar{\y}$. Utilizing the machinery of \cite{StojnicCSetam09,StojnicCSetamBlock09} one then has
\begin{eqnarray}
w(\h,\Sw) = -\max_{\lambda\geq 0,\gamma\geq 0}\min_{\bar{\y}} & & \sum_{i=1}^{n+k} -\hw_i \bar{\y}_i+\lambda\sum_{i=1}^{n-k}\bar{\y}_i
-\lambda\sum_{i=n-k+1}^{n}\bar{\y}_i+\gamma\sum_{i=1}^{n+k}\bar{\y}_i^2-\gamma\nonumber \\
\mbox{subject to} & & \bar{\y}_i\geq 0, 0\leq i\leq n-k,n+1\leq i\leq n+k.\label{eq:ldpwhSw0}
\end{eqnarray}
After solving the inner minimization one finally obtains
\begin{eqnarray}
w(\h,\Sw) & = & \min_{\lambda\geq0,\gamma\geq 0} \frac{\sum_{i=1}^{n-k}\max(\hw_i-\lambda,0)^2+\sum_{i=n-k+1}^{n}(\hw_i+\lambda)^2+\sum_{i=n+1}^{n+k}\hw_i^2}{4\gamma}+\gamma\nonumber \\
& = & \min_{\lambda\geq0}\sqrt{\sum_{i=1}^{n-k}\max(\hw_i-\lambda,0)^2+\sum_{i=n-k+1}^{n}(\hw_i+\lambda)^2+\sum_{i=n+1}^{n+k}\hw_i^2}.\label{eq:ldpwhSw}
\end{eqnarray}
A combination of (\ref{eq:ldpprob4}) and (\ref{eq:ldpwhSw}) provides an upper bound on $P_{err}$. We summarize these results in the following theorem.
\begin{theorem}
Let $A$ be an $M\times N$ matrix in (\ref{eq:system})
with i.i.d. standard normal components. Let
the unknown $\x$ in (\ref{eq:system}) be $k$-block-sparse with the length of its blocks $d$. Further, let the location and the directions of nonzero blocks of $\x$ be arbitrarily chosen but fixed. Let $P_{err}$ be the probability that the solution of (\ref{eq:l2l1}) is not the $k$-block-sparse solution of (\ref{eq:system}). Then
\begin{equation}
P_{err}\leq \min_{c_3\geq 0}e^{-\frac{c_3^2}{2}}e^{-c_3\|\g\|_2}Ee^{c_3w(\h,S_w')}
=\min_{c_3\geq 0}\left (e^{-\frac{c_3^2}{2}}\frac{1}{\sqrt{2\pi}^M}\int_{\g}e^{-\sum_{i=1}^{M}\g_i^2/2-c_3\|\g\|_2}d\g \min_{\lambda\geq 0,\gamma\geq\frac{c_3}{2}} w_1^{n-k}w_2^{k}w_3^ke^{c_3\gamma}\right ),
\label{eq:ldpthm1perrub1}
\end{equation}
where
\begin{eqnarray}
  w_1 &=& \int_{\hw_i\geq 0}\frac{2^{-d/2}}{\Gamma(d/2)}\hw_i^{d/2-1}e^{-\hw_i/2}e^{c_3\max(\sqrt{\hw_i}-\lambda,0)^2/4/\gamma}d\hw_i\nonumber \\
  w_2 &=& \frac{1}{\sqrt{2\pi}}\int_{\hw_i}e^{-\hw_i^2/2}e^{c_3(\hw_i+\lambda)^2/4/\gamma}d\hw_i\nonumber \\
  w_3 &=& \int_{\hw_i\geq 0}\frac{2^{-(d-1)/2}}{\Gamma((d-1)/2)}\hw_i^{(d-1)/2-1}e^{-\hw_i/2}e^{c_3\hw_i/4/\gamma}d\hw_i.\label{eq:ldpthm1perrub2}
\end{eqnarray}\label{thm:ldp1}
\end{theorem}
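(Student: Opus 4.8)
This statement consolidates the chain (\ref{eq:ldpprob})--(\ref{eq:ldpwhSw}) already built above, so the plan is to assemble those pieces and make the two remaining expectations explicit. First I would start from (\ref{eq:ldpprob4}): for each fixed $c_3\ge 0$ it gives $P_{err}\le e^{-c_3^2/2}\,Ee^{-c_3\|\g\|_2}\,Ee^{c_3w(\h,\Sw)}$, where the width over $\Sw'$ from (\ref{eq:defSwpr}) has been replaced by the width over $\Sw$ via the rotational-invariance reduction of \cite{StojnicCSetamBlock09} (rotating each nonzero block so that $\X_i=(\|\X_i\|_2,0,\dots,0)$, hence $\X_i^T\W_i/\|\X_i\|_2=\w_{(i-1)d+1}$, which leaves the law of the width unchanged because $\h$ is rotationally invariant). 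Since the bound holds for every $c_3\ge 0$, taking $\min_{c_3\ge 0}$ at the end is free. The factor $Ee^{-c_3\|\g\|_2}$ is just the expectation of a function of an $M$-dimensional standard normal vector, so it equals $(\sqrt{2\pi})^{-M}\int_{\g}e^{-\|\g\|_2^2/2-c_3\|\g\|_2}\,d\g$, which is the first integral in (\ref{eq:ldpthm1perrub1}).

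The one substantive step is to bound $Ee^{c_3w(\h,\Sw)}$. I would use the closed form (\ref{eq:ldpwhSw}); fixing any admissible $(\lambda,\gamma)$ and discarding the outer minimum yields the pointwise bound
\[
w(\h,\Sw)\le \gamma+\frac{1}{4\gamma}\left(\sum_{i=1}^{n-k}\max(\hw_i-\lambda,0)^2+\sum_{i=n-k+1}^{n}(\hw_i+\lambda)^2+\sum_{i=n+1}^{n+k}\hw_i^2\right).
\]
Exponentiating and taking expectation, one uses that the three blocks of entries of $\hw$ in (\ref{eq:defhweak}) --- the norms $\|\H_i\|_2$ for $i\le n-k$, the scalars $-\h_{(i-1)d+1}$ for $n-k<i\le n$, and the truncated norms $\|\H_i^*\|_2$ for $n-k<i\le n$ --- depend on pairwise disjoint coordinates of $\h$, hence are mutually independent, and are i.i.d.\ within each block. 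Therefore the expectation factorizes as $e^{c_3\gamma}$ times $(n-k)$ copies of $Ee^{c_3\max(\|\H_1\|_2-\lambda,0)^2/(4\gamma)}$, $k$ copies of $Ee^{c_3(\h_1+\lambda)^2/(4\gamma)}$, and $k$ copies of $Ee^{c_3\|\H_1^*\|_2^2/(4\gamma)}$. Passing to the squared norms, which are $\chi^2_d$- and $\chi^2_{d-1}$-distributed with densities $\tfrac{2^{-d/2}}{\Gamma(d/2)}t^{d/2-1}e^{-t/2}$ and $\tfrac{2^{-(d-1)/2}}{\Gamma((d-1)/2)}t^{(d-1)/2-1}e^{-t/2}$, identifies these three expectations with $w_1$, $w_2$, $w_3$ of (\ref{eq:ldpthm1perrub2}). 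Minimizing the resulting bound $w_1^{n-k}w_2^{k}w_3^{k}e^{c_3\gamma}$ over admissible $(\lambda,\gamma)$ and substituting into the first paragraph's chain produces (\ref{eq:ldpthm1perrub1}).

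The points still needing care are minor. The constraint $\gamma\ge c_3/2$ is exactly the condition under which $w_1,w_2,w_3$ are finite: against the Gaussian and $\chi^2$ tails the quadratic exponents of order $c_3 t^2/(4\gamma)$ are integrable precisely when $c_3/(4\gamma)<1/2$. The step from $E\min_{\lambda,\gamma}(\cdot)$ to $\min_{\lambda,\gamma}E(\cdot)$ is just $E\min\le\min E$ together with Fubini for the factorization, so the ``$=$'' written in the theorem between $e^{-c_3^2/2}Ee^{-c_3\|\g\|_2}Ee^{c_3w(\h,\Sw')}$ and the explicit product should be understood as ``$\le$''. I do not expect a genuine obstacle here: the only place where real work occurs is the Lagrange-duality computation producing (\ref{eq:ldpwhSw}), which is already in hand; what remains is assembly, one change of variables, and the convergence check.
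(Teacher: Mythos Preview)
Your proposal is correct and follows essentially the same approach as the paper, which simply says the result ``follows by combining (\ref{eq:ldpprob4}) and (\ref{eq:ldpwhSw})'' and then records the $\chi^2_d$, standard normal, and $\chi^2_{d-1}$ laws of the three groups of $\hw_i$'s. Your additional remarks---that the blocks of $\hw$ are independent because they depend on disjoint coordinates of $\h$, that $\gamma> c_3/2$ is precisely the integrability condition for $w_1,w_2,w_3$, and that the ``$=$'' in (\ref{eq:ldpthm1perrub1}) is really a ``$\le$'' coming from $E\min\le\min E$---are all correct and make explicit what the paper leaves tacit.
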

\begin{proof}
Follows by combining (\ref{eq:ldpprob4}) and (\ref{eq:ldpwhSw}) and noting that $\hw_i,i\in[n-k+1,n]$, are i.i.d. standard normals and  $\hw_i^2,i\notin[n-k+1,n]$, are $\chi$-square distributed according to
\begin{eqnarray}\label{eq:ldpthm1perrub3}
  p(\hw_i^2) &=& \frac{2^{-d/2}}{\Gamma(d/2)}(\hw_i^2)^{d/2-1}e^{-\hw_i^2/2},1\leq i\leq n-k\nonumber   \\
  p(\hw_i) &=& \frac{1}{\sqrt{2\pi}}e^{-\hw_i^2/2},n-k+1\leq i\leq n\nonumber   \\
  p(\hw_i^2) &=& \frac{2^{-(d-1)/2}}{\Gamma((d-1)/2)}(\hw_i^2)^{(d-1)/2-1}e^{-\hw_i^2/2},n+1\leq i\leq n+k,
  \end{eqnarray}
where, as usual, $\Gamma(\cdot)$ stands for the standard gamma function.
\end{proof}
We emphasize here that such a bound is valid for any integers $M$, $k$, $n$, and $d$ (of course assuming $dk\leq M\leq dn$ so that the results make sense). As in the previous section, in this section we are also interested in the asymptotic scenario, namely the scenario from Theorem \ref{thm:thmgenweak}. In particular, we are interested in the scenarios where $\alpha=\frac{m}{n}\geq (1+\epsilon_{\alpha})\alpha_w$ with $\epsilon_{\alpha}>0$ and $\alpha_w$ being the minimal $\alpha$ such that for a fixed $\beta_w$ the equations of Theorem \ref{thm:thmgenweak} are satisfied. In such a scenario what primarily determines $P_{err}$ is its exponent. Hence, one has
\begin{equation}\label{ldpasymp1}
  I_{err}=\lim_{n\rightarrow\infty}\frac{\log{P_{err}}}{n}.
\end{equation}
This of course clearly resembles the so-called large deviation principle (LDP) with $I_{err}$ emulating the so-called LDP's rate (indicator) function. Based on Theorem \ref{thm:ldp1} we have the following LDP type of theorem.
\begin{theorem}
Assume the setup of Theorem \ref{thm:ldp1}. Further, let integers $M$, $k$, and $n$ be large ($dk\leq M\leq dn$). Also, assume the following scaling: $c_3=c_3^{(s)}\sqrt{n}$ and $\gamma=\gamma^{(s)}\sqrt{n}$. Then
\begin{equation}
I_{err}=\lim_{n\rightarrow\infty}\frac{\log{P_{err}}}{n}
\leq \min_{c_3^{(s)}\geq 0}\left (-\frac{(c_3^{(s)})^2}{2}+I_{sph}+\min_{\lambda\geq 0,\gamma^{(s)}\geq 0} ((1-\beta_w)\log{w_1}+\beta_w\log{w_2}+\beta_w\log{w_3}+c_3^{(s)}\gamma^{(s)})\right ),
\label{eq:ldpthm2Ierrub1}
\end{equation}
where
\begin{eqnarray}
I_{sph} &=& \widehat{\gamma^{(s)}}c_3^{(s)}-\frac{\alpha d}{2}\log\left (1-\frac{c_3^{(s)}}{2\widehat{\gamma^{(s)}}}\right )\nonumber \\
  \widehat{\gamma^{(s)}} &=& \frac{2c_3^{(s)}-\sqrt{4(c_3^{(s)})^2+16\alpha d}}{8}\nonumber \\
  w_1 &=& \int_{\hw_i\geq 0}\frac{2^{-d/2}}{\Gamma(d/2)}\hw_i^{d/2-1}e^{-\hw_i/2}e^{c_3^{(s)}\max(\sqrt{\hw_i}-\lambda,0)^2/4/\gamma^{(s)}}d\hw_i\nonumber \\
  w_2 &=& \frac{1}{\sqrt{2\pi}}\int_{\hw_i}e^{-\hw_i^2/2}e^{c_3^{(s)}(\hw_i+\lambda)^2/4/\gamma^{(s)}}d\hw_i=e^{(c_3^{(s)}/4/\gamma^{(s)}\lambda^2)/(1-2c_3^{(s)}/4/\gamma^{(s)})}/\sqrt{1-2c_3^{(s)}/4/\gamma^{(s)}}\nonumber \\
  w_3 &=& \int_{\hw_i\geq 0}\frac{2^{-(d-1)/2}}{\Gamma((d-1)/2)}\hw_i^{(d-1)/2-1}e^{-\hw_i/2}e^{c_3^{(s)}\hw_i/4/\gamma^{(s)}}d\hw_i=\left (1/\sqrt{1-2c_3^{(s)}/4/\gamma^{(s)}}\right )^{d-1}.\label{eq:ldpthm2perrub2}
\end{eqnarray}\label{thm:ldp2}
\end{theorem}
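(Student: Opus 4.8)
The plan is to obtain the asserted rate function by a direct asymptotic evaluation of the finite-dimensional bound (\ref{eq:ldpthm1perrub1}) of Theorem~\ref{thm:ldp1} under the prescribed scaling $c_3=c_3^{(s)}\sqrt{n}$, $\gamma=\gamma^{(s)}\sqrt{n}$. Since (\ref{eq:ldpthm1perrub1}) already reads $P_{err}\leq\min_{c_3\geq 0}(\cdots)$ and the claim is only an \emph{upper} bound on the limit $I_{err}$ from (\ref{ldpasymp1}), there is no genuine interchange-of-limits issue: for any fixed $c_3^{(s)}\geq 0$, any fixed $\lambda\geq 0$ and any fixed $\gamma^{(s)}>c_3^{(s)}/2$, replacing the minimizing triple $(c_3,\lambda,\gamma)$ by $(c_3^{(s)}\sqrt{n},\lambda,\gamma^{(s)}\sqrt{n})$ can only weaken the inequality, so $\frac{1}{n}\log P_{err}$ stays below the resulting expression; one lets $n\to\infty$ there and minimizes over $c_3^{(s)},\lambda,\gamma^{(s)}$ at the very end. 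After substituting $M=\alpha d n$, $n-k=(1-\beta_w)n$, $k=\beta_w n$, the four factors in (\ref{eq:ldpthm1perrub1}) are treated separately.

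Three of them are routine. The prefactor $e^{-c_3^2/2}=e^{-(c_3^{(s)})^2 n/2}$ contributes $-(c_3^{(s)})^2/2$ to $\frac{1}{n}\log$. In the product $w_1^{n-k}w_2^{k}w_3^{k}e^{c_3\gamma}$ the scaling leaves the ratio $c_3/\gamma=c_3^{(s)}/\gamma^{(s)}$ invariant, so $w_1,w_2,w_3$ of (\ref{eq:ldpthm1perrub2}) turn verbatim into the $O(1)$ integrals of (\ref{eq:ldpthm2perrub2}); using $c_3\gamma/n=c_3^{(s)}\gamma^{(s)}$ one gets $\frac{1}{n}\log(w_1^{n-k}w_2^k w_3^k e^{c_3\gamma})\to(1-\beta_w)\log w_1+\beta_w\log w_2+\beta_w\log w_3+c_3^{(s)}\gamma^{(s)}$. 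Along the way one records the elementary closed forms, $w_2$ by completing the square in a Gaussian integral and $w_3$ as the moment generating function of a $\chi^2$ variable with $d-1$ degrees of freedom, both finite precisely when $1-2c_3^{(s)}/(4\gamma^{(s)})>0$, i.e. $\gamma^{(s)}>c_3^{(s)}/2$ --- the scaled form of the constraint $\gamma\geq c_3/2$, so that the minimization over $\gamma^{(s)}$ in the statement is really over $\gamma^{(s)}>c_3^{(s)}/2$. Note that $\lambda$ stays $O(1)$: inside $w_1,w_2$ it is compared against norms $\|\H_i\|_2=O(1)$, which is why it appears unscaled in the limit, and exactly as in the passage from (\ref{eq:ldpprob4}) to (\ref{eq:ldpwhSw}) keeping $\lambda$ unscaled costs nothing.

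The only substantive step is the exponential order of the spherical factor $\frac{1}{\sqrt{2\pi}^M}\int_{\g}e^{-\|\g\|_2^2/2-c_3\|\g\|_2}d\g=Ee^{-c_3\|\g\|_2}$, with $\g$ an $M$-dimensional standard Gaussian. I would evaluate it by Laplace's method in polar coordinates: writing it as $\frac{2^{1-M/2}}{\Gamma(M/2)}\int_0^\infty r^{M-1}e^{-r^2/2-c_3 r}\,dr$, substituting $r=\rho\sqrt{n}$ with $M=\alpha d n$, and observing that the integrand is $e^{n(\alpha d\log\rho-\rho^2/2-c_3^{(s)}\rho)}$ up to subexponential factors, the saddle point sits at $\rho^*$ with $(\rho^*)^2+c_3^{(s)}\rho^*=\alpha d$. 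The $\log n$ pieces produced by the substitution Jacobian (a factor $n^{M/2}$) and by Stirling's formula applied to $\Gamma(M/2)$ mutually cancel, leaving $\frac{1}{n}\log Ee^{-c_3\|\g\|_2}\to I_{sph}$, where $I_{sph}$ equals the saddle value $\alpha d\log\rho^*-(\rho^*)^2/2-c_3^{(s)}\rho^*$ plus the $\rho$-independent constant $\frac{\alpha d}{2}(1-\log(\alpha d))$. Setting $\widehat{\gamma^{(s)}}=-\rho^*/2$ turns the saddle equation into exactly the quadratic-root formula for $\widehat{\gamma^{(s)}}$ in the statement, and a one-line rearrangement (using $1-c_3^{(s)}/(2\widehat{\gamma^{(s)}})=\alpha d/(\rho^*)^2$) brings $I_{sph}$ into the displayed form $\widehat{\gamma^{(s)}}c_3^{(s)}-\frac{\alpha d}{2}\log(1-c_3^{(s)}/(2\widehat{\gamma^{(s)}}))$.

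Adding the three contributions and minimizing over $\lambda\geq 0$, over $\gamma^{(s)}$, and finally over $c_3^{(s)}\geq 0$ gives (\ref{eq:ldpthm2Ierrub1}). I do not expect a genuine obstacle --- the argument is an asymptotic reduction of an inequality already in hand --- and the only points that need care are (i) recognizing that the correct scaling of the dual variables is $c_3,\gamma\sim\sqrt{n}$ while $\lambda\sim 1$, since a different choice collapses the bound to a triviality, and (ii) the bookkeeping that all $O(\log n)$ terms (from $\Gamma(M/2)$ and the polar Jacobian) cancel so that $\frac{1}{n}\log P_{err}$ indeed has a finite limit. Finally, it should be stressed that the conclusion is only an upper bound on $I_{err}$: although the spherical factor is evaluated exactly at the level of exponents, the overall one-sidedness is inherited from the Chernoff and Gordon estimates underlying Theorem~\ref{thm:ldp1}.
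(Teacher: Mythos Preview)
Your proposal is correct and follows essentially the same route as the paper: start from the finite-$n$ bound of Theorem~\ref{thm:ldp1}, insert the scaling $c_3=c_3^{(s)}\sqrt{n}$, $\gamma=\gamma^{(s)}\sqrt{n}$, and read off the four exponential contributions. The only real difference is in how the spherical factor $Ee^{-c_3\|\g\|_2}$ is handled. The paper does not carry out the computation but simply cites the identity (\ref{eq:gamaiden2lift})--(\ref{eq:gamaiden3lift}) from \cite{StojnicMoreSophHopBnds10}; you instead derive $I_{sph}$ from scratch by a polar-coordinate Laplace argument, identify the saddle $(\rho^*)^2+c_3^{(s)}\rho^*=\alpha d$, and then reparameterize via $\widehat{\gamma^{(s)}}=-\rho^*/2$ to recover the displayed form. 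Your derivation is sound (the cancellation of the $O(\log n)$ terms from $\Gamma(M/2)$ and the Jacobian indeed leaves exactly the constant $\frac{\alpha d}{2}(1-\log(\alpha d))$, and the rearrangement using $1-c_3^{(s)}/(2\widehat{\gamma^{(s)}})=\alpha d/(\rho^*)^2$ checks out against the saddle equation). The upshot: your argument is more self-contained, the paper's is shorter by appeal to prior work; otherwise the two proofs coincide. Your explicit remarks on why $\lambda$ stays $O(1)$ and why the constraint $\gamma^{(s)}>c_3^{(s)}/2$ is the natural domain are also helpful clarifications that the paper leaves implicit.
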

\begin{proof} Follows from Theorem \ref{thm:ldp1} and by noting that in \cite{StojnicMoreSophHopBnds10} we established
\begin{equation}
I_{sph}=\lim_{n\rightarrow\infty}\frac{1}{n}\log(Ee^{-c_3^{(s)}\sqrt{n}\|\g\|_2})
=\widehat{\gamma^{(s)}}c_3^{(s)}-\frac{\alpha d}{2}\log\left (1-\frac{c_3^{(s)}}{2\widehat{\gamma^{(s)}}}\right ),\label{eq:gamaiden2lift}
\end{equation}
where
\begin{equation}
\widehat{\gamma^{(s)}}=\frac{2c_3^{(s)}-\sqrt{4(c_3^{(s)})^2+16\alpha d}}{8}.\label{eq:gamaiden3lift}
\end{equation}
\end{proof}
In Figure \ref{fig:ldpIerrubd2d10} we present the estimates of the $P_{err}$ exponents (i.e. of $I_{err}$ -- the LDP's rate function values) that can be obtained using Theorem \ref{thm:ldp2}. Additionally, in Tables \ref{tab:Ierrldpubtab2d2} and \ref{tab:Ierrldpubtab2d10} we present some of the values we used for $c_3^{(s)}$ , $\gamma^{(s)}$, and $\lambda$ to compute $I_{err,u}^{(ub)}$ as the estimate. Plots in Figure \ref{fig:ldpIerrubd2d10} indicate that increasing block-length may shorten the so-called transition zone. A similar conclusion will also appear as reasonable in finite dimension scenarios that we will discuss in the following section.
\begin{table}[h]
\caption{A collection of values for $c_3^{(s)}$ , $\gamma^{(s)}$, $\lambda$, and $I_{err,u}^{(ub)}$ in Theorem \ref{thm:ldp2}; $d=2$, $\beta=\frac{1}{3}$}\vspace{.1in}
\hspace{-0in}\centering
\begin{tabular}{||c||c|c|c|c|c||}\hline\hline
$\alpha$        & $ 0.6500 $ & $ 0.7100 $ & $ 0.7700 $ & $ 0.8300 $ & $ 0.8900 $\\ \hline\hline
$c_3^{(s)}$     & $ 0.1263 $ & $ 0.7007 $ & $ 1.3534 $ & $ 2.2095 $ & $ 3.4521 $\\ \hline
$\gamma^{(s)}$  & $ 0.6026 $ & $ 0.7963 $ & $ 1.0451 $ & $ 1.4010 $ & $ 1.9539 $\\ \hline
$\lambda$       & $ 0.8973 $ & $ 0.7932 $ & $ 0.6965 $ & $ 0.5883 $ & $ 0.4701 $\\ \hline\hline
$I_{err,u}^{(ub)}$& $ \bl{\mathbf{-0.0007}} $ & $ \bl{\mathbf{-0.0215}} $ & $ \bl{\mathbf{-0.0701}} $ & $ \bl{\mathbf{-0.1475}} $ & $ \bl{\mathbf{-0.2565}} $\\ \hline\hline
\end{tabular}
\label{tab:Ierrldpubtab2d2}
\end{table}

\begin{table}[h]
\caption{A collection of values for $c_3^{(s)}$ , $\gamma^{(s)}$, $\lambda$, and $I_{err,u}^{(ub)}$ in Theorem \ref{thm:ldp2}; $d=10$, $\beta=\frac{1}{3}$}\vspace{.1in}
\hspace{-0in}\centering
\begin{tabular}{||c||c|c|c|c|c||}\hline\hline
$\alpha$        & $ 0.5900 $ & $ 0.6400 $ & $ 0.6900 $ & $ 0.7400 $ & $ 0.7900 $\\ \hline\hline
$c_3^{(s)}$     & $ 0.2334 $ & $ 1.1136 $ & $ 2.0377 $ & $ 3.0881 $ & $ 4.3441 $\\ \hline
$\gamma^{(s)}$  & $ 1.2743 $ & $ 1.5736 $ & $ 1.9182 $ & $ 2.3361 $ & $ 2.8622 $\\ \hline
$\lambda$       & $ 2.0307 $ & $ 1.8998 $ & $ 1.7647 $ & $ 1.6125 $ & $ 1.4488 $\\ \hline\hline
$I_{err,u}^{(ub)}$& $ \bl{\mathbf{-0.0032}} $ & $ \bl{\mathbf{-0.0702}} $ & $ \bl{\mathbf{-0.2199}} $ & $ \bl{\mathbf{-0.4498}} $ & $ \bl{\mathbf{-0.7622}} $\\ \hline\hline
\end{tabular}
\label{tab:Ierrldpubtab2d10}
\end{table}

\begin{figure}[htb]
\centering
\centerline{\epsfig{figure=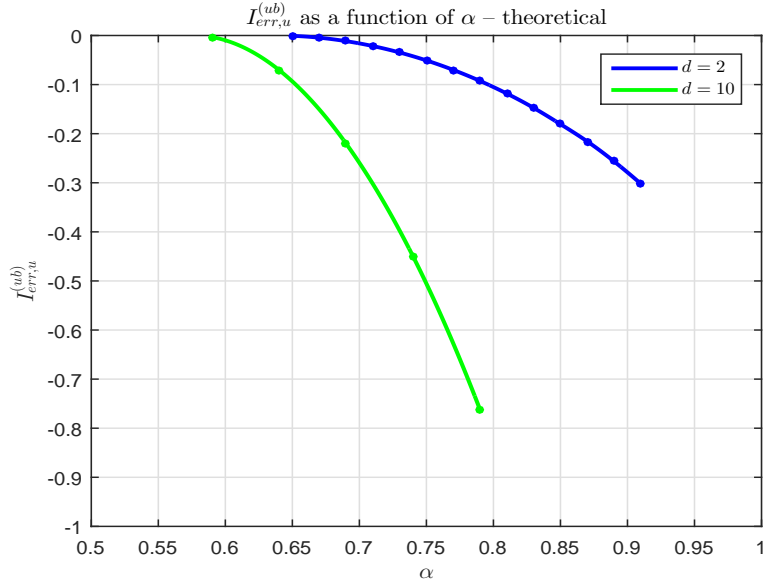,width=11.5cm,height=8cm}}
\caption{$I_{err,u}^{(ub)}$ as a function of $\alpha$; block-lengths $d=2$ and $d=10$}
\label{fig:ldpIerrubd2d10}
\end{figure}

\subsection{Lower tail}
\label{sec:lowertail}

%

In this subsection we focus on the lower bound of the lower tail type of large deviations. The results that we will present below will essentially complement the results from the previous subsection. As earlier, let the elements of $A$ be i.i.d. standard normals. We will below consider the following probability
\begin{eqnarray}
P_{cor} &=& P(\min_{A\w=0,\|\w\|_2\leq 1}\sum_{i=n-k+1}^n \frac{\X_i^T\W_i}{\|\X_i\|_2}+\sum_{i=1}^{n-k}\|\W_{i}\|_2\geq 0)\nonumber \\
&=&
P(\min_{\|\w\|_2\leq 1}\max_{\nu}\sum_{i=n-k+1}^n \frac{\X_i^T\W_i}{\|\X_i\|_2}+\sum_{i=1}^{n-k}\|\W_{i}\|_2+\nu^T A\w\geq 0)\nonumber \\
&=&
P(\max_{\nu}\min_{\|\w\|_2\leq 1}\sum_{i=n-k+1}^n \frac{\X_i^T\W_i}{\|\X_i\|_2}+\sum_{i=1}^{n-k}\|\W_{i}\|_2+\nu^T A\w\geq 0)\nonumber \\
&=&
P(\max_{\nu}\min_{\|\w\|_2= 1}\sum_{i=n-k+1}^n \frac{\X_i^T\W_i}{\|\X_i\|_2}+\sum_{i=1}^{n-k}\|\W_{i}\|_2+\nu^T A\w\geq 0)\nonumber \\
&\leq &
P(\max_{\nu}\min_{\|\w\|_2= 1}\sum_{i=n-k+1}^n \frac{\X_i^T\W_i}{\|\X_i\|_2}+\sum_{i=1}^{n-k}\|\W_{i}\|_2+\nu^T A\w+(g-t_1)\|\nu\|_2\geq 0)/P(g\geq t_1),\nonumber \\
\label{eq:ldpproblower}
\end{eqnarray}
where $P_{cor}=1-P_{err}$ is the probability that (\ref{eq:l2l1}) does produce the solution of (\ref{eq:system}), $t_1$ is a scalar that will be discussed later, and $g$ is a standard normal random variable independent of $A$ (it is probably clear, but for the completeness, we recall that in order to lower bound the lower tail of $P_{err}$, it is enough to upper-bound the complementary quantity $P_{cor}$). We will then make use of the following lemma from \cite{Gordon85}.
\begin{lemma}\cite{Gordon85}
Let $A$ be an $M\times N$ matrix with i.i.d. standard normal components. Let $\g$ and $\h$ be $M\times 1$ and $N\times 1$ vectors, respectively, with i.i.d. standard normal components. Also, let $g$ be a standard normal random variable. Then
\begin{equation}
P(\max_{\nu\in R^n\setminus 0}\min_{\|\w\|_2=1}(\nu^T A\w +\|\nu\|_2 g-\zeta_{\w,\nu,t_1,\X})\geq 0)\leq P(\max_{\nu\in R^n\setminus 0}\min_{\|\w\|_2=1}( \g^T\nu_i+\|\nu\|_2\h^T\w-\zeta_{\w,\nu,t_1,\X})\geq 0).\label{eq:problemmalower}
\end{equation}\label{lemma:unsignedlemmalower}
\end{lemma}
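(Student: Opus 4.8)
The statement to prove is Lemma \ref{lemma:unsignedlemmalower}, a Gordon-type comparison inequality. Let me think about how one proves such a thing.

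\textbf{The plan.} The plan is to recognize that Lemma \ref{lemma:unsignedlemmalower} is an instance of Gordon's comparison theorem (the ``escape through the mesh''/min-max comparison inequality from \cite{Gordon85}), applied to a carefully chosen pair of Gaussian processes, and to reduce the stated probabilistic inequality to the hypotheses of that theorem by absorbing the auxiliary scalar $g$ into an augmented matrix. Concretely, I would first reinterpret the quantity $\nu^T A\w + \|\nu\|_2 g$ as the bilinear form $\bar\nu^T \bar A\,\w$ associated with the augmented Gaussian matrix $\bar A = [A; \g_{\mathrm{row}}]$ — or, more cleanly, observe that for fixed $\nu$ and $\w$ on the sphere, the Gaussian process $X_{\w,\nu} = \nu^T A\w + \|\nu\|_2 g$ has exactly the same covariance structure as the process obtained by stacking an extra independent Gaussian coordinate; this is the device already used in \cite{StojnicMoreSophHopBnds10} and in the passage above leading to (\ref{eq:ldpprob2}).

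\textbf{Key steps.} First, I would define the two Gaussian processes indexed by pairs $(\w,\nu)$ with $\|\w\|_2=1$, $\nu\neq 0$: the process $X_{\w,\nu}=\nu^T A\w+\|\nu\|_2 g$ and the process $Y_{\w,\nu}=\g^T\nu+\|\nu\|_2\h^T\w$. Second, I would verify the covariance (in)equalities that Gordon's theorem requires: $E X_{\w,\nu}^2 = E Y_{\w,\nu}^2 = \|\nu\|_2^2$ (after the normalization $\|\w\|_2=1$), and for two index points $(\w^{(1)},\nu^{(1)})$, $(\w^{(2)},\nu^{(2)})$ one needs $E X_{\w^{(1)},\nu^{(1)}}X_{\w^{(2)},\nu^{(2)}} = (\nu^{(1)})^T\nu^{(2)}\,(\w^{(1)})^T\w^{(2)}$, while $E Y_{\w^{(1)},\nu^{(1)}}Y_{\w^{(2)},\nu^{(2)}} = (\nu^{(1)})^T\nu^{(2)} + \|\nu^{(1)}\|_2\|\nu^{(2)}\|_2(\w^{(1)})^T\w^{(2)}$; one then checks that the requisite inequality between these (with equality when the first index coincides) holds because $\|\nu^{(1)}\|_2\|\nu^{(2)}\|_2 \geq (\nu^{(1)})^T\nu^{(2)}$ and $(\w^{(1)})^T\w^{(2)}\leq 1$ — this is precisely the sign pattern Gordon's min-max theorem needs. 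Third, I would subtract the common deterministic term $\zeta_{\w,\nu,t_1,\X}$ (which does not affect any covariance and merely shifts the processes) and apply Gordon's theorem to conclude $P(\max_\nu\min_{\|\w\|_2=1}(X_{\w,\nu}-\zeta)\geq 0)\leq P(\max_\nu\min_{\|\w\|_2=1}(Y_{\w,\nu}-\zeta)\geq 0)$, which is exactly (\ref{eq:problemmalower}). A minor technical point is handling the unbounded index set $\nu\in R^n\setminus 0$: by homogeneity one can restrict $\nu$ to the unit sphere (the events are invariant under positive scaling of $\nu$ once $\zeta$ is also homogeneous of degree one in $\nu$, which it is by construction), then pass to a countable dense subset / compactness argument to legitimately invoke the comparison inequality, exactly as in \cite{Gordon85,StojnicMoreSophHopBnds10}.

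\textbf{Main obstacle.} The genuinely delicate step is the covariance bookkeeping that certifies the hypotheses of Gordon's min-max comparison: one must get the direction of the inequality right for the ``different index'' case while maintaining equality for the ``same index'' case, and one must confirm that the deterministic shift $\zeta_{\w,\nu,t_1,\X}$ — which encodes the $\ell_2/\ell_1$ objective $\sum_{i=n-k+1}^n \frac{\X_i^T\W_i}{\|\X_i\|_2}+\sum_{i=1}^{n-k}\|\W_{i}\|_2$ together with the $-t_1\|\nu\|_2$ term — is compatible with the homogeneity reduction in $\nu$. Everything else (the Gaussian integral identities, the reduction to the sphere, and the final comparison) is routine given the earlier machinery of \cite{StojnicCSetam09,StojnicCSetamBlock09,StojnicMoreSophHopBnds10}; indeed the lemma is stated as essentially a direct citation of \cite{Gordon85}, so the proof amounts to spelling out this identification.
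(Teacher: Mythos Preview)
The paper does not actually prove this lemma; it is stated with a bare citation to \cite{Gordon85} and then immediately applied. So there is no ``paper's own proof'' to compare against --- your sketch is in fact more than what the paper supplies, and your identification of the statement as a direct instance of Gordon's min--max comparison (with the extra scalar $g$ serving to match second moments) is exactly right in spirit.

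That said, your covariance bookkeeping contains a slip. Because $g$ is shared across all index points, the cross moment of the $X$ process is
\[
E\,X_{\w^{(1)},\nu^{(1)}}X_{\w^{(2)},\nu^{(2)}}=(\nu^{(1)})^T\nu^{(2)}\,(\w^{(1)})^T\w^{(2)}+\|\nu^{(1)}\|_2\|\nu^{(2)}\|_2,
\]
not just the bilinear term you wrote, and the common variance is $2\|\nu\|_2^2$, not $\|\nu\|_2^2$. With the correct expressions one gets
\[
E\,X_1X_2-E\,Y_1Y_2=\bigl((\w^{(1)})^T\w^{(2)}-1\bigr)\bigl((\nu^{(1)})^T\nu^{(2)}-\|\nu^{(1)}\|_2\|\nu^{(2)}\|_2\bigr)\geq 0,
\]
with equality when $\nu^{(1)}=\nu^{(2)}$; this is precisely the sign pattern Gordon's theorem requires, so your conclusion survives the correction. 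The remaining points in your plan --- invariance under the deterministic shift $\zeta_{\w,\nu,t_1,\X}$, the homogeneity reduction to $\|\nu\|_2=1$, and the separability/compactness passage --- are handled exactly as you indicate.
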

A combination of (\ref{eq:ldpproblower}) and (\ref{eq:problemmalower}) (with a proper adjustment of $\zeta_{\w,\nu,t_1,\X}$) then gives
\begin{eqnarray}
P_{cor}
&\leq &
P(\max_{\nu}\min_{\|\w\|_2= 1}\sum_{i=n-k+1}^n \frac{\X_i^T\W_i}{\|\X_i\|_2}+\sum_{i=1}^{n-k}\|\W_{i}\|_2+\nu^T A\w+(g-t_1)\|\nu\|_2\geq 0)/P(g\geq t_1)\nonumber \\
&\leq &
P(\max_{\lambda}\min_{\|\w\|_2= 1}\lambda(\sum_{i=n-k+1}^n \frac{\X_i^T\W_i}{\|\X_i\|_2}+\sum_{i=1}^{n-k}\|\W_{i}\|_2)+\|\g\|_2 +\h^T\w-t_1\geq 0)/P(g\geq t_1)\nonumber \\
&\leq &
P(\|\g\|_2 -w(\h,S_w')-t_1\geq 0)/P(g\geq t_1)\nonumber \\
&= &
P(\|\g\|_2 -w(\h,S_w)-t_1\geq 0)/P(g\geq t_1),\nonumber \\
\label{eq:ldpprob2lower}
\end{eqnarray}
where we set $\lambda=1/\|\nu\|_2$ and used the considerations from the previous subsection that relate to $w(\h,S_w')$ and $w(\h,S_w)$. Using the Chernoff bound we further have
\begin{equation}
P_{cor}
\leq
P(\|\g\|_2 -w(\h,S_w)-t_1\geq 0)/P(g\geq t_1)
\leq \min_{c_3\geq 0}
Ee^{c_3\|\g\|_2}Ee^{-c_3w(\h,S_w)}e^{-c_3t_1}/P(g\geq t_1).
\label{eq:ldpprob3lower}
\end{equation}
Using (\ref{eq:ldpprob3lower}) one can then establish the following lower tail analogous version of Theorem \ref{thm:ldp1} which provides an upper bound on $P_{cor}$.
\begin{theorem}
Let $A$ be an $M\times N$ matrix in (\ref{eq:system})
with i.i.d. standard normal components. Let
the unknown $\x$ in (\ref{eq:system}) be $k$-block-sparse with the length of its blocks $d$. Further, let the location and the directions of nonzero blocks of $\x$ be arbitrarily chosen but fixed. Let $P_{cor}$ be the probability that the solution of (\ref{eq:l2l1}) is the $k$-block-sparse solution of (\ref{eq:system}). Then
\begin{equation}
P_{cor}\leq \min_{t_1}\min_{c_3\geq 0} Ee^{c_3\|\g\|_2}Ee^{-c_3w(\h,S_w')}e^{-c_3t_1}/P(g\geq t_1).
\label{eq:ldpthm1perrub1lower}
\end{equation}
\label{thm:ldp1lower}
\end{theorem}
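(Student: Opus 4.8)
The plan is to assemble the bound directly from the chain of inequalities culminating in (\ref{eq:ldpprob3lower}), so once the three structural ingredients are in place the proof is essentially bookkeeping. First I would justify the min-max reduction of $P_{cor}$: encode the constraint $A\w=0$ with a Lagrange multiplier $\nu$, turning $\min_{A\w=0,\|\w\|_2\le1}(\cdot)$ into $\min_{\|\w\|_2\le1}\max_{\nu}(\cdot+\nu^T A\w)$; the inner objective is affine in $\nu$ and convex in $\w$ on a compact convex set, so a minimax theorem permits swapping the order, and because the block terms are homogeneous one can tighten $\|\w\|_2\le1$ to $\|\w\|_2=1$, which is exactly what is recorded in the four equalities of (\ref{eq:ldpproblower}). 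Then I would insert the auxiliary term $(g-t_1)\|\nu\|_2$ with $g$ a standard normal independent of $A$: since $\{g\ge t_1\}$ is independent of everything else, the event of interest is contained in the enlarged event intersected with $\{g\ge t_1\}$, producing the division by $P(g\ge t_1)$ in the last line of (\ref{eq:ldpproblower}).

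Next I would invoke Lemma \ref{lemma:unsignedlemmalower} (Gordon's comparison inequality) with $\zeta_{\w,\nu,t_1,\X}$ chosen to carry the block terms $\sum_{i=n-k+1}^n \X_i^T\W_i/\|\X_i\|_2+\sum_{i=1}^{n-k}\|\W_i\|_2$ together with $t_1$; this replaces $\nu^T A\w+\|\nu\|_2 g$ by $\g^T\nu+\|\nu\|_2\h^T\w$ without increasing the probability. After setting $\lambda=1/\|\nu\|_2$ the maximization over $\nu$ decouples: $\|\g\|_2$ separates out, and what remains, namely $\min_{\|\w\|_2=1}\big(\lambda(\text{block terms})+\h^T\w\big)$ optimized over $\lambda\ge0$, equals $-w(\h,\Sw')$ by the definition (\ref{eq:widthdefSwpr}) of the Gaussian width of $\Sw'$ (cf.\ (\ref{eq:defSwpr})). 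This yields the third line of (\ref{eq:ldpprob2lower}); the passage to $w(\h,\Sw)$ in the fourth line is the rotational-invariance simplification already carried out in the upper-tail subsection ((\ref{eq:defSwpr})–(\ref{eq:defSw}) and (\ref{eq:widthdefSwpr})), which I would simply cite rather than redo.

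Finally, a Chernoff bound on $P(\|\g\|_2-w(\h,\Sw)-t_1\ge0)$ with parameter $c_3\ge0$, using the independence of $\g$ and $\h$, gives $Ee^{c_3\|\g\|_2}Ee^{-c_3 w(\h,\Sw)}e^{-c_3 t_1}$; rewriting $w(\h,\Sw)$ as $w(\h,\Sw')$ and taking the infimum over the free parameters $t_1$ and $c_3\ge0$ reproduces exactly (\ref{eq:ldpthm1perrub1lower}). The main obstacle is the very first step, the minimax exchange: the set of multipliers $\nu$ is unbounded, so one must be careful about the $+\infty$ values, about attainment of the maximum, and about the fact that tightening $\|\w\|_2\le1$ to $\|\w\|_2=1$ preserves the optimum — all of which are handled exactly as in \cite{StojnicCSetamBlock09}, so the strategy is to mirror that earlier reduction verbatim rather than reprove it. Everything after that is a routine application of Gordon's lemma and the Chernoff bound.
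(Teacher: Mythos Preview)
Your proposal is correct and follows essentially the same route as the paper: the derivation (\ref{eq:ldpproblower})--(\ref{eq:ldpprob3lower}) is exactly the min-max reduction with the auxiliary $(g-t_1)\|\nu\|_2$ term, Gordon's comparison Lemma~\ref{lemma:unsignedlemmalower}, the substitution $\lambda=1/\|\nu\|_2$ to extract $w(\h,\Sw')$, the rotational-invariance passage to $w(\h,\Sw)$, and finally the Chernoff bound. The paper simply cites (\ref{eq:ldpprob3lower}) as the proof, so your write-up is in fact a more explicit version of the same argument.
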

We again emphasize that the above bound is valid for any integers $M$, $k$, $n$, and $d$ (of course, as earlier, assuming $dk\leq M\leq dn$ so that the results make sense). As in the previous section, in this section we are also interested in the asymptotic scenario, namely the scenario from Theorem \ref{thm:thmgenweak}. Differently from the previous subsection, here we are interested in the scenarios where $\alpha=\frac{m}{n}\geq (1-\epsilon_{\alpha})\alpha_w$ with $\epsilon_{\alpha}>0$ and $\alpha_w$ being the minimal $\alpha$ such that for a fixed $\beta_w$ the equations of Theorem \ref{thm:thmgenweak} are satisfied. Similarly to what we had earlier for the exponent of $P_{err}$ we now have
\begin{equation}\label{ldpasymp1lower}
  I_{cor}=\lim_{n\rightarrow\infty}\frac{\log{P_{cor}}}{n}.
\end{equation}
Based on Theorem \ref{thm:ldp1} and similarly as in Theorem \ref{thm:ldp2} we have the following LDP type of result.
\begin{theorem}
Assume the setup of Theorem \ref{thm:ldp1lower}. Further, let integers $M$, $k$, and $n$ be large ($dk\leq M\leq dn$). Also, assume the following scaling: $c_3=c_3^{(s)}\sqrt{n}$ and $\gamma=\gamma^{(s)}\sqrt{n}$. Then
\begin{equation}
I_{cor}=\lim_{n\rightarrow\infty}\frac{\log{P_{cor}}}{n}
\leq \min_{c_3^{(s)}\geq 0}\left (-\frac{(c_3^{(s)})^2}{2}+I_{sph}^++\max_{\lambda\geq 0,\gamma^{(s)}\geq 0} ((1-\beta_w)\log{w_1}+\beta_w\log{w_2}+\beta_w\log{w_3}-c_3^{(s)}\gamma^{(s)})\right ),
\label{eq:ldpthm2Ierrub1}
\end{equation}
where
\begin{eqnarray}
I_{sph}^+ &=& \widehat{\gamma_+^{(s)}}c_3^{(s)}-\frac{\alpha d}{2}\log\left (1-\frac{c_3^{(s)}}{2\widehat{\gamma_+^{(s)}}}\right )\nonumber \\
  \widehat{\gamma_+^{(s)}} &=& \frac{2c_3^{(s)}+\sqrt{4(c_3^{(s)})^2+16\alpha d}}{8}\nonumber \\
  w_1 &=& \int_{\hw_i\geq 0}\frac{2^{-d/2}}{\Gamma(d/2)}\hw_i^{d/2-1}e^{-\hw_i/2}e^{-c_3^{(s)}\max(\sqrt{\hw_i}-\lambda,0)^2/4/\gamma^{(s)}}d\hw_i\nonumber \\
  w_2 &=& \frac{1}{\sqrt{2\pi}}\int_{\hw_i}e^{-\hw_i^2/2}e^{-c_3^{(s)}(\hw_i+\lambda)^2/4/\gamma^{(s)}}d\hw_i=e^{(-c_3^{(s)}/4/\gamma^{(s)}\lambda^2)/(1+2c_3^{(s)}/4/\gamma^{(s)})}/\sqrt{1+2c_3^{(s)}/4/\gamma^{(s)}}\nonumber \\
  w_3 &=& \int_{\hw_i\geq 0}\frac{2^{-d/2}}{\Gamma(d/2)}\hw_i^{d/2-1}e^{-\hw_i/2}e^{-c_3^{(s)}\hw_i/4/\gamma^{(s)}}d\hw_i=\left (1/\sqrt{1+2c_3^{(s)}/4/\gamma^{(s)}}\right )^{d-1}.\label{eq:ldpthm2perrub2lower}
\end{eqnarray}\label{thm:ldp2lower}
\end{theorem}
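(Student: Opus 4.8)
The plan is to run the proof of Theorem~\ref{thm:ldp2} almost verbatim, only starting from the lower‑tail estimate of Theorem~\ref{thm:ldp1lower} and inserting the $\sqrt n$ scalings. First I would recall from the previous subsection that $w(\h,\Sw')=w(\h,\Sw)$ and that, by (\ref{eq:ldpwhSw}),
\be
w(\h,\Sw)=\min_{\lambda\geq 0,\gamma\geq 0}\frac{Q(\lambda,\hw)}{4\gamma}+\gamma,\qquad Q(\lambda,\hw):=\sum_{i=1}^{n-k}\max(\hw_i-\lambda,0)^2+\sum_{i=n-k+1}^{n}(\hw_i+\lambda)^2+\sum_{i=n+1}^{n+k}\hw_i^2 .
\ee
Because this is a \emph{minimum}, $e^{-c_3 w(\h,\Sw)}=\max_{\lambda\geq 0,\gamma\geq 0}e^{-c_3(Q(\lambda,\hw)/4\gamma+\gamma)}$, and using the independence of the coordinates of $\hw$ and the distributions (\ref{eq:ldpthm1perrub3}), for each fixed $(\lambda,\gamma)$ one gets $Ee^{-c_3(Q(\lambda,\hw)/4\gamma+\gamma)}=w_1^{n-k}w_2^{k}w_3^{k}e^{-c_3\gamma}$ with $w_1,w_2,w_3$ exactly the integrals in (\ref{eq:ldpthm2perrub2lower}); note that, since the exponents now carry a \emph{minus} sign, the Gaussian integral $w_2$ converges for every $\gamma>0$ and no constraint $\gamma\geq c_3/2$ is needed, unlike in Theorem~\ref{thm:ldp1}. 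Combining this with the Chernoff bound (\ref{eq:ldpprob3lower}) gives the finite‑dimensional estimate $P_{cor}\leq \min_{t_1}\min_{c_3\geq 0}Ee^{c_3\|\g\|_2}\,E\!\left[\max_{\lambda\geq 0,\gamma\geq 0}w_1^{n-k}w_2^{k}w_3^{k}e^{-c_3\gamma}\right]e^{-c_3 t_1}/P(g\geq t_1)$.

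Next I would impose $c_3=c_3^{(s)}\sqrt n$, $\gamma=\gamma^{(s)}\sqrt n$, $t_1=t_1^{(s)}\sqrt n$, $k=\beta_w n$, $M=\alpha d n$, take $\frac1n\log(\cdot)$, and pass to the limit $n\to\infty$, handling the three factors separately. For $Ee^{c_3\|\g\|_2}$ the limit is $I_{sph}^+$: this is the $+\|\g\|_2$ analogue of the identity (\ref{eq:gamaiden2lift})--(\ref{eq:gamaiden3lift}) of \cite{StojnicMoreSophHopBnds10}, the only change being that the saddle‑point equation now selects the root $\widehat{\gamma_+^{(s)}}=\frac{2c_3^{(s)}+\sqrt{4(c_3^{(s)})^2+16\alpha d}}{8}$ (the one for which $1-c_3^{(s)}/(2\widehat{\gamma_+^{(s)}})$ stays positive). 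For the block factor, since $c_3/(4\gamma)=c_3^{(s)}/(4\gamma^{(s)})$ is $n$‑independent, each $w_j$ is an $O(1)$ constant, $w_1^{n-k}w_2^{k}w_3^{k}e^{-c_3\gamma}$ has exponent $n\big((1-\beta_w)\log w_1+\beta_w\log w_2+\beta_w\log w_3-c_3^{(s)}\gamma^{(s)}\big)$, and the closed forms for $w_2,w_3$ follow from elementary Gaussian and $\chi$ integration. Finally, since $P(g\geq t_1)$ agrees with $e^{-t_1^2/2}$ up to polynomial factors, $\frac1n\log\big(e^{-c_3 t_1}/P(g\geq t_1)\big)\to -c_3^{(s)}t_1^{(s)}+\frac{(t_1^{(s)})^2}{2}$, whose minimum over $t_1^{(s)}\geq 0$ is attained at $t_1^{(s)}=c_3^{(s)}$ with value $-\frac{(c_3^{(s)})^2}{2}$; this is exactly where the $-\frac{(c_3^{(s)})^2}{2}$ term of (\ref{eq:ldpthm2Ierrub1}) comes from (in the upper tail it came instead from Gordon's Gaussian insertion). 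Adding the three contributions and keeping the outer $\min_{c_3^{(s)}\geq 0}$ produces (\ref{eq:ldpthm2Ierrub1}).

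The delicate step — the one that genuinely separates the lower tail from the upper tail — is the \emph{maximum} over $(\lambda,\gamma)$ sitting \emph{inside} the expectation. In Theorem~\ref{thm:ldp2} there was a minimum there and it could simply be pulled out via $E\min\le\min E$; here $E\max\ge\max E$ points the wrong way for an upper bound, so there is no clean finite‑dimensional version and one must argue asymptotically. The way I would handle it is the standard net argument: first restrict $(\lambda,\gamma^{(s)})$ to a fixed compact box (values of $\gamma^{(s)}$ too small or too large, or $\lambda$ far outside the essential support of the $\hw_i$, make $w_1^{n-k}w_2^{k}w_3^{k}e^{-c_3\gamma}$ so small that, up to an event of exponentially small probability, they cannot be the maximizer); then cover the box by an $\epsilon/n$‑net — polynomially many points, since $\log$ of the integrand is $O(n)$‑Lipschitz in $(\lambda,\gamma)$ after the scaling — so that $E[\max_{\text{box}}]\le |\text{net}|\cdot e^{O(\epsilon n)}\max_{\text{net}}E[\,\cdot\,]$, the net size being $o(n)$ in the exponent; letting first $n\to\infty$ and then $\epsilon\to 0$, and using continuity in $(\lambda,\gamma^{(s)})$, turns $\max$ over the net into $\max$ over the box, which commutes with $\frac1n\log$. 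One also has to check that the one‑dimensional, smooth minimizations over $t_1$ and $c_3^{(s)}$ survive the limit, but these cause no difficulty.
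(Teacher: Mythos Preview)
Your proposal is correct and follows essentially the same route as the paper: start from Theorem~\ref{thm:ldp1lower}, invoke the $I_{sph}^+$ identity for $Ee^{c_3\|\g\|_2}$, optimize over $t_1$ to produce the $-\frac{(c_3^{(s)})^2}{2}$ term, and identify the exponential rate of $Ee^{-c_3 w(\h,S_w)}$ with the $\max_{\lambda,\gamma^{(s)}}$ expression. The paper's own proof simply asserts this last limit as ``analogous'' without addressing the $E[\max]$ versus $\max E$ issue you correctly flag; your net argument is a standard and adequate way to close that gap, so your write-up is in fact more complete than the paper's.
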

\begin{proof} Follows from Theorem \ref{thm:ldp1lower} optimizing over $t_1$ and by noting that in \cite{StojnicMoreSophHopBnds10} we established
\begin{equation}
I_{sph}^+=\lim_{n\rightarrow\infty}\frac{1}{n}\log(Ee^{c_3^{(s)}\sqrt{n}\|\g\|_2})
=\widehat{\gamma_+^{(s)}}c_3^{(s)}-\frac{\alpha d}{2}\log\left (1-\frac{c_3^{(s)}}{2\widehat{\gamma_+^{(s)}}}\right ),\label{eq:gamaiden2liftlowe}
\end{equation}
where
\begin{equation}
\widehat{\gamma_+^{(s)}}=\frac{2c_3^{(s)}+\sqrt{4(c_3^{(s)})^2+16\alpha d}}{8},\label{eq:gamaiden3liftlower}
\end{equation}
and that analogously one has
\begin{eqnarray}
\lim_{n\rightarrow \infty}\frac{1}{n}\log(Ee^{-c_3w(\h,S_w')})
&=&
\lim_{n\rightarrow \infty}\frac{1}{n}\log\left (Ee^{-c_3\min_{\lambda\geq0,\gamma\geq 0} \left ( \frac{\sum_{i=1}^{n-k}\max(\hw_i-\lambda,0)^2+\sum_{i=n-k+1}^{n}(\hw_i+\lambda)^2+\sum_{i=n+1}^{n+k}\hw_i^2}{4\gamma}+\gamma\right )}\right )\nonumber \\
&=&
\max_{\lambda\geq 0,\gamma^{(s)}\geq 0} ((1-\beta_w)\log{w_1}+\beta_w\log{w_2}+\beta_w\log{w_3}-c_3^{(s)}\gamma^{(s)}).
\end{eqnarray}
\end{proof}
In Figure \ref{fig:ldpIerrlowerupperubd2d10} we present the estimates of the $P_{cor}$ exponents (i.e. of $I_{cor}$) that one can obtain using Theorem \ref{thm:ldp2lower}. Additionally, in Tables \ref{tab:Ierrldplowerubtab2d2} and \ref{tab:Ierrldplowerubtab2d10} we present some of the values we used for $c_3^{(s)}$ , $\gamma^{(s)}$, and $\lambda$ to compute $I_{cor,l}^{(ub)}$ as the estimate. As in the previous subsection, plots in Figure \ref{fig:ldpIerrlowerupperubd2d10} indicate that increasing block-length may shorten the so-called transition zone.

\begin{table}[h]
\caption{A collection of values for $c_3^{(s)}$ , $\gamma^{(s)}$, $\lambda$, and $I_{cor,l}^{(ub)}$ in Theorem \ref{thm:ldp2lower}; $d=2$, $\beta=\frac{1}{3}$}\vspace{.1in}
\hspace{-0in}\centering
\begin{tabular}{||c||c|c|c|c|c||}\hline\hline
$\alpha$          & $ 0.5700 $ & $ 0.5300 $ & $ 0.4900 $ & $ 0.4500 $ & $ 0.4100 $\\ \hline\hline
$c_3^{(s)}$       & $ 0.6198 $ & $ 1.0547 $ & $ 1.5715 $ & $ 2.3026 $ & $ 3.5891 $\\ \hline
$\gamma^{(s)}$    & $ 0.4005 $ & $ 0.3128 $ & $ 0.2391 $ & $ 0.1703 $ & $ 0.1078 $\\ \hline
$\lambda$         & $ 1.0187 $ & $ 1.0810 $ & $ 1.1474 $ & $ 1.2214 $ & $ 1.3014 $\\ \hline\hline
$I_{cor,l}^{(ub)}$& $ \bl{\mathbf{-0.0185}} $ & $ \bl{\mathbf{-0.0495}} $ & $ \bl{\mathbf{-0.0978}} $ & $ \bl{\mathbf{-0.1674}} $ & $ \bl{\mathbf{-0.2649}} $\\ \hline\hline
\end{tabular}
\label{tab:Ierrldplowerubtab2d2}
\end{table}

\begin{table}[h]
\caption{A collection of values for $c_3^{(s)}$ , $\gamma^{(s)}$, $\lambda$, and $I_{cor,l}^{(ub)}$ in Theorem \ref{thm:ldp2lower}; $d=10$, $\beta=\frac{1}{3}$}\vspace{.1in}
\hspace{-0in}\centering
\begin{tabular}{||c||c|c|c|c|c||}\hline\hline
$\alpha$          & $ 0.5500 $ & $ 0.5100 $ & $ 0.4700 $ & $ 0.4300 $ & $ 0.3900 $\\ \hline\hline
$c_3^{(s)}$       & $ 0.4789 $ & $ 1.2466 $ & $ 2.1555 $ & $ 3.4129 $ & $ 5.7283 $\\ \hline
$\gamma^{(s)}$    & $ 1.0590 $ & $ 0.8598 $ & $ 0.6717 $ & $ 0.4896 $ & $ 0.3075 $\\ \hline
$\lambda$         & $ 2.1338 $ & $ 2.2412 $ & $ 2.3487 $ & $ 2.4591 $ & $ 2.5873 $\\ \hline\hline
$I_{cor,l}^{(ub)}$& $ \bl{\mathbf{-0.0132}} $ & $ \bl{\mathbf{-0.0872}} $ & $ \bl{\mathbf{-0.2361}} $ & $ \bl{\mathbf{-0.4791}} $ & $ \bl{\mathbf{-0.8549}} $\\ \hline\hline
\end{tabular}
\label{tab:Ierrldplowerubtab2d10}
\end{table}

\begin{figure}[htb]
\centering
\centerline{\epsfig{figure=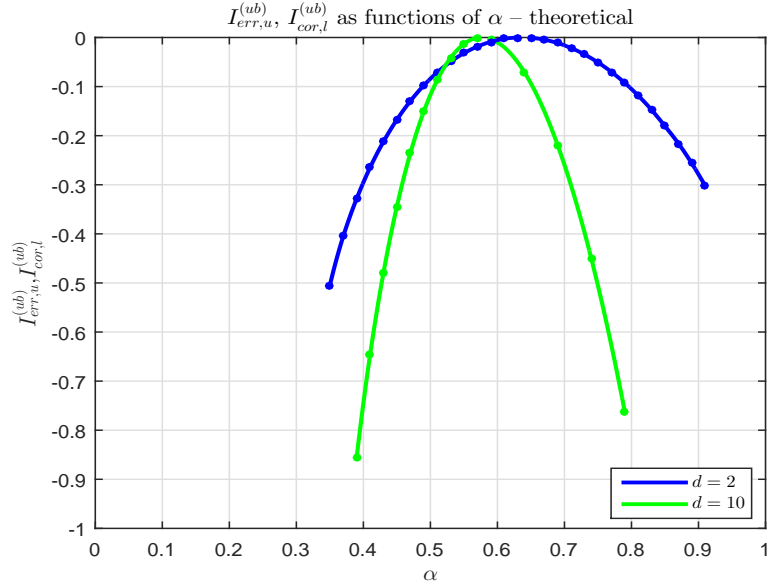,width=11.5cm,height=8cm}}
\caption{$I_{err,u}^{(ub)}$, $I_{cor,l}^{(ub)}$ as functions of $\alpha$; block-lengths $d=2$ and $d=10$}
\label{fig:ldpIerrlowerupperubd2d10}
\end{figure}

\section{Finite dimensions}
\label{sec:findim}

In this section we will raise the level of considerations a bit (in fact quite a bit). Instead of basically neglecting the true systems dimensions (as was done in the previous sections) we will here specifically focus on them. When it comes to handling problems similar to those considered here, the theories that we have built over the years are fairly superior and we found hardly any other known mathematical tool that can outmatch them. The main features that essentially make them very hard to beat are their exactness, universality, and ultimately their simplicity. A bit lost among them is one additional feature that in a way distinguishes them. Namely, almost all of the results that we created are fairly easy to adjust to finite dimension scenarios. In such scenarios the original simplicity and universality/applicability are preserved. For the problem of interest in this paper, we will towards the end of this section demonstrate that this is indeed true. Before doing that we will focus on another path that we view as essentially the only counterpart that sometimes can match the performance characterization capabilities of the theories that we have built.

\subsection{High-dimensional geometry}
\label{sec:highdimgeom}

The path that we will consider below can be viewed as geometrical and as such is in a rapid contrast with the techniques that we employed in \cite{StojnicCSetam09,StojnicUpper10,StojnicUpperBlock10,StojnicCSetamBlock09} and ultimately in earlier sections of this paper. We will start things off by introducing a few terms and notions well known in high-dimensional geometry, again assuming a high degree of familiarity with some of them. To that end, let $S_{st}$ be the intersection of a convex cone $C_{st}$ and the $n_{st}$-dimensional unit sphere, $S^{n_{st}-1}$, i.e.
\begin{equation}\label{eq:findimSst}
  S_{st}=C_{st}\cap S^{n_{st}-1}.
\end{equation}
Let $\zeta_{\epsilon_{st}}(S_{st})$ be the $\epsilon_{st}$-neighborhood ($0<\epsilon_{st}<\pi/2$) spherical extension of $S_{st}$, i.e.
\begin{equation}\label{eq:findimSsteps}
  \zeta_{\epsilon_{st}}(S_{st})=\{\y\in S^{n_{st}-1}|\quad 0<\min_{\w\in S_{st}}\arccos(\y^T\w)\leq \epsilon_{st} \}.
\end{equation}
Then the following is the so-called spherical Steiner formula (see, e.g. \cite{SW08})
\begin{equation}\label{eq:findimsphSteiner}
  \sigma_{n_{st}-1}(\zeta_{\epsilon_{st}}(S_{st}))=\sum_{i_{st}=0}^{n_{st}-2}g_{n_{st},i_{st}}v_{i_{st}}(S_{st}),
\end{equation}
where
\begin{equation}\label{eq:findimgSteiner}
  g_{n_{st},i_{st}}=\sigma_{i_{st}}(S^{i_{st}})\sigma_{n_{st}-i_[st]-2}(S^{n_{st}-i_{st}-2})
  \int_{0}^{\epsilon_{st}}\cos^{i_{st}}\phi \sin^{n_{st}-i_{st}-2}\phi d\phi,
\end{equation}
and $\sigma_{n_{st}-1}(\cdot)$ is the standard Lebesque measure on the unit $S^{n_{st}-1}$. In fact,
\begin{equation}\label{eq:findimsphmeas}
  \sigma_{n_{st}-1}(S^{n_{st}-1})=\frac{2\pi^{n_{st}/2}}{\Gamma(\frac{n_{st}}{2})}.
\end{equation}
The numbers $v_{i_{st}}(S_{st}),0\leq i_{st}\leq n_{st}-2$, in the spherical Steiner formula are often supplemented by
\begin{equation}\label{eq:findimvm}
  v_{n_{st}-1}(S_{st})=\frac{\sigma_{n_{st}-1}(S_{st})}{\sigma_{n_{st}-1}(S^{n_{st}-1})}.
\end{equation}
One also assumes $v_{i_{st}}(\emptyset)=0$ and $v_{-1}(S_{st})=v_{n_{st}-1}({\cal D}(S_{st}))$, where
\begin{equation}\label{eq:dualSst}
  {\cal D}(S_{st})=\{\y\in S^{n_{st}-1}|\quad \y^T\w\leq 0 \quad \forall \w\in S_{st}\}.
\end{equation}
The numbers $v_{i_{st}}$ are uniquely determined by the Steiner spherical formula and are called the spherical intrinsic volumes. The following spherical Crofton like formula connects the spherical intrinsic volumes and the likelihoods that a random $(n_{st}-j)$-dimensional $G_{n_{st},n_{st}-j}$ linear subspace of $R^{n_{st}}$ from the corresponding Grassmannian misses sets of $S_{st}$ type (see, e.g. \cite{SW08})
\begin{equation}\label{eq:findimCrft}
P(G_{n_{st},n_{st}-j}\cap S_{st}\neq\emptyset)=2\sum_{k_{st}=0}^{\lfloor\frac{n_{st}-1-j}{2}\rfloor}v_{j+2k_{st}+1}(S_{st}).
\end{equation}
Choosing $S_{st}=S_{w}'$ and connecting (\ref{eq:ldpprob}) and (\ref{eq:findimCrft}) while keeping in mind a rotational invariance of $A$ (alternatively one can instead of Gaussian $A$'s consider $A$'s that have the null-space distributed uniformly randomly in the corresponding Grassmaninans) one has
\begin{equation}\label{eq:findimperrcrft}
P_{err}= P(\min_{\w\in S_w'}\|A\w\|_2\leq 0)=P(G_{dn,dn-M}\cap S_{w}'\neq\emptyset)=2\sum_{k_{st}=0}^{\lfloor\frac{dn-1-M}{2}\rfloor}v_{M+2k_{st}+1}(S_{w}').
\end{equation}
The above formula then positions Crofton like (\ref{eq:findimCrft}) as a potentially powerful tool in determining $P_{err}$. In fact, the combination of (\ref{eq:findimsphSteiner}) and (\ref{eq:findimCrft}) is indeed a well known principal tool in high-dimensional integral geometry. The problem is that it typically remains of not much practical use as it requires addressing several major problems. Resolving these problems is in general much harder than deriving any of (\ref{eq:findimsphSteiner}) and (\ref{eq:findimCrft}) (we should though emphasize that while deriving (\ref{eq:findimsphSteiner}) is basically trivial for high-dimensional integral geometry experts, deriving (\ref{eq:findimCrft}) is not super obvious and typically uses as key steps the ideas that date back to the works of Santalo, Hadwiger, and McMullen, obviously the integral geometry greats). Namely, to use (\ref{eq:findimsphSteiner}) one should be able first to determine $\sigma_{n_{st}-1}(\zeta_{\epsilon_{st}}(S_{st}))$ and then to hope that the system on the right side of (\ref{eq:findimsphSteiner}) is (computationally speaking) well-posed. In the remaining parts of this section we will try to address these problems.

In general, determining $\sigma_{n_{st}-1}(\zeta_{\epsilon_{st}}(S_{st}))$ analytically is rarely known to be possible. Below, we will present a strategy that heavily relies on the machinery that we introduced in \cite{StojnicBlockDepNon10,StojnicCSetamBlock09,StojnicISIT2010binary,StojnicICASSP10knownsupp} and to a degree utilized in the previous sections of this paper. Using such a strategy we will be able to determine asymptotically optimal estimates of $\sigma_{dn-1}(\zeta_{\epsilon_{st}}(S_{w}'))$ that will also turn out to be quite close approximations in finite dimensions. Moreover, the approximations that we will derive will in fact be rigorous upper bounds on $\sigma_{dn-1}(\zeta_{\epsilon_{st}}(S_{w}'))$ (the numerical results that we will present will in fact confirm that these bounds are indeed very, very close to the exact values).

\subsubsection{Determining $\frac{\sigma_{dn-1}(\zeta_{\epsilon_{st}}(S_{w}'))}{\sigma_{dn-1}(S^{dn-1})}$}
\label{sec:sigmast}

We will find it a bit more natural to work with a scaled version of $\sigma_{dn-1}(\zeta_{\epsilon_{st}}(S_{w}'))$, namely the ratio $\frac{\sigma_{dn-1}(\zeta_{\epsilon_{st}}(S_{w}'))}{\sigma_{dn-1}(S^{dn-1})}$ (of course, determining $\frac{\sigma_{dn-1}(\zeta_{\epsilon_{st}}(S_{w}'))}{\sigma_{dn-1}(S^{dn-1})}$ automatically determines $\sigma_{dn-1}(\zeta_{\epsilon_{st}}(S_{w}'))$).
We start by quickly modifying the $\zeta_{\epsilon_{st}}(S_{w}')$ given in (\ref{eq:findimSsteps}) with $S_{st}$ obviously replaced by $S_{w}'$. To that end we have
\begin{eqnarray}\label{eq:findimSstepsmod}
    \zeta_{\epsilon_{st}}(S_{w}') &=& \{\y\in S^{dn-1}|\quad 0<\min_{\w\in S_{w}'}\arccos(\y^T\w)\leq \epsilon_{st} \}\nonumber \\
   &=& \{\y\in S^{dn-1}|\quad 1>\cos (\min_{\w\in S_{w}'}\arccos(\y^T\w))\geq \cos(\epsilon_{st}) \}\nonumber\\
   &=& \{\y\in S^{dn-1}|\quad 1>\max_{\w\in S_{w}'}\cos(\arccos(\y^T\w))\geq \cos(\epsilon_{st}) \}\nonumber \\
   &=& \{\y\in S^{dn-1}|\quad \max_{\w\in S_{w}'}(\y^T\w)\geq \cos(\epsilon_{st}) \}.
\end{eqnarray}
We then impose the uniform measure over the unit sphere $S^{dn-1}$ through the standard Gaussians. Namely, let, as earlier, $\h$ be an $dn$-dimensional vector of i.i.d. standard normals. Then, due to the rotational invariance of $\h$, we have
\begin{equation}\label{eq:sigmast1}
  \frac{\sigma_{dn-1}(\zeta_{\epsilon_{st}}(S_{w}'))}{\sigma_{dn-1}(S^{dn-1})}=\frac{\sigma_{dn-1}(\{\y\in S^{dn-1}|\quad \max_{\w\in S_{w}'}(\y^T\w)\geq \cos(\epsilon_{st}) \})}{\sigma_{dn-1}(S^{dn-1})}
  =P(\max_{\w\in S_{w}'}\frac{\h^T\w}{\|\h\|_2}\geq \cos(\epsilon_{st})).
\end{equation}
Based on (\ref{eq:widthdefSwpr}) and  (\ref{eq:sigmast1}) we easily obtain
\begin{equation}\label{eq:sigmast2}
\frac{\sigma_{dn-1}(\zeta_{\epsilon_{st}}(S_{w}'))}{\sigma_{dn-1}(S^{dn-1})}  =P((\max_{\w\in S_{w}'}(\h^T\w))^2\geq \cos^2(\epsilon_{st})\|\h\|_2^2)=P((w(\h,S_w'))^2\geq \cos^2(\epsilon_{st})\|\h\|_2^2).
\end{equation}
Similarly to what was done in (\ref{eq:ldpprob4}) and ultimately based on the rotation considerations from \cite{StojnicCSetamBlock09} we also have
\begin{equation}\label{eq:sigmast3}
\frac{\sigma_{dn-1}(\zeta_{\epsilon_{st}}(S_{w}'))}{\sigma_{dn-1}(S^{dn-1})}  =P((w(\h,S_w'))^2\geq \cos^2(\epsilon_{st})\|\h\|_2^2)=P((w(\h,S_w))^2\geq \cos^2(\epsilon_{st})\|\h\|_2^2).
\end{equation}
Now, set
\begin{eqnarray}
w_\lambda(\h,\Sw) &=&\sqrt{\sum_{i=1}^{n-k}\max(\hw_i-\lambda,0)^2+\sum_{i=n-k+1}^{n}(\hw_i+\lambda)^2+\sum_{i=n+1}^{n+k}\hw_i^2}\nonumber \\
w_{\lambda,1} &=& \sum_{i=1}^{n-k}\max(\hw_i-\lambda,0)^2\nonumber \\
w_{\lambda,2} &=& \sum_{i=n-k+1}^{n}(\hw_i+\lambda)^2\nonumber \\
w_{\lambda,3} &=& \sum_{i=n+1}^{n+k}\hw_i^2.
\label{eq:sigmastldpwhSwlam}
\end{eqnarray}
Then a combination of (\ref{eq:sigmast3}) and (\ref{eq:ldpwhSw}) further gives
\begin{eqnarray}\label{eq:sigmast4}
\frac{\sigma_{dn-1}(\zeta_{\epsilon_{st}}(S_{w}'))}{\sigma_{dn-1}(S^{dn-1})}  &=& P((w(\h,S_w))^2\geq \cos^2(\epsilon_{st})\|\h\|_2^2)\nonumber \\
  &=& P((\min_{\lambda\geq0}w_\lambda(\h,S_w))^2\geq \cos^2(\epsilon_{st})\|\h\|_2^2)\nonumber \\
  &\leq& \min_{\lambda\geq0} P((w_\lambda(\h,S_w))^2\geq \cos^2(\epsilon_{st})\|\h\|_2^2)\nonumber \\
  &=& 1-\max_{\lambda\geq0} P((w_\lambda(\h,S_w))^2< \cos^2(\epsilon_{st})\|\h\|_2^2).
\end{eqnarray}
We will now focus on computing $P((w_\lambda(\h,S_w))^2< \cos^2(\epsilon_{st})\|\h\|_2^2)$. We start by observing the following simple identity
\begin{equation}\label{eq:sigmast5}
P_\lambda=P((w_\lambda(\h,S_w))^2< \cos^2(\epsilon_{st})\|\h\|_2^2)=
 \frac{1}{\sqrt{2\pi}^{dn}}\int_{\h}h((-w_\lambda(\h,S_w))^2+\cos^2(\epsilon_{st})\|\h\|_2^2)e^{-\frac{\|\h\|^2}{2}}d\h,
\end{equation}
where $h(t)$ is the standard indicator step function. The following characterization of the standard step function we found fairly useful
\begin{equation}\label{eq:sigmasthfun}
  h(t)=\lim_{\epsilon_{wf}\rightarrow 0_+}\int_{-\infty}^{\infty}\frac{e^{jw_ft}}{2\pi j(w_f-j\epsilon_{wf})}dw_f.
\end{equation}
Combining (\ref{eq:sigmast5}) and (\ref{eq:sigmasthfun}) we then have
\begin{eqnarray}\label{eq:sigmast6}
P_\lambda &=&  \frac{1}{\sqrt{2\pi}^{dn}}\int_{\h}h((-w_\lambda(\h,S_w))^2+\cos^2(\epsilon_{st})\|\h\|_2^2\geq 0)e^{-\frac{\|\h\|_2^2}{2}}d\h\nonumber\\
   &=&  \frac{1}{\sqrt{2\pi}^{dn}}\int_{\h}
   \lim_{\epsilon_{wf}\rightarrow 0_+}\int_{-\infty}^{\infty}\frac{e^{-jw_f((w_\lambda(\h,S_w))^2-\cos^2(\epsilon_{st})\|\h\|_2^2)}}{2\pi j(w_f-j\epsilon_{wf})}dw_f
   e^{-\frac{\|\h\|_2^2}{2}}d\h \nonumber\\
   &=&
   \lim_{\epsilon_{wf}\rightarrow 0_+}\int_{-\infty}^{\infty}\frac{1}{{2\pi j(w_f-j\epsilon_{wf})}}\frac{1}{\sqrt{2\pi}^{dn}}\int_{\h}e^{-jw_f(w_{\lambda,1}+w_{\lambda,2}+w_{\lambda,3}-\cos^2(\epsilon_{st})\|\h\|_2^2)}
   e^{-\frac{\|\h\|_2^2}{2}}d\h dw_f \nonumber\\
   &=&
   \lim_{\epsilon_{wf}\rightarrow 0_+}\int_{-\infty}^{\infty}\frac{1}{{2\pi j(w_f-j\epsilon_{wf})}}I_{\lambda,1}(w_f,\epsilon_{st})I_{\lambda,2}(w_f,\epsilon_{st})I_{\lambda,3}(w_f,\epsilon_{st})dw_f,
\end{eqnarray}
where
\begin{eqnarray}\label{eq:sigmast7}
  I_{\lambda,1}(w_f,\epsilon_{st}) &=& \frac{1}{\sqrt{2\pi}^{d(n-k)}}\int e^{-jw_f(w_{\lambda,1}-\cos^2(\epsilon_{st})\|\h_{1:d(n-k)}\|_2^2)}
   e^{-\frac{\|\h_{1:d(n-k)}\|_2^2}{2}}d\h_{1:d(n-k)} \nonumber\\
  I_{\lambda,2}(w_f,\epsilon_{st}) &=& \frac{1}{\sqrt{2\pi}^{k}}\int e^{-jw_f(w_{\lambda,2}-\cos^2(\epsilon_{st})
 \sum_{i=n-k+1}^{n}\|\h_{d(i-1)+1}\|_2^2)}
   \prod_{i=n-k+1}^{n}e^{-\frac{\|\h_{d(i-1)+1}\|_2^2}{2}}d\h_{d(i-1)+1} \nonumber\\
  I_{\lambda,3}(w_f,\epsilon_{st}) &=& \frac{1}{\sqrt{2\pi}^{(d-1)(k)}}\int e^{-jw_f(w_{\lambda,3}-\cos^2(\epsilon_{st})
  \sum_{i=n-k+1}^{n}\|\h_{d(i-1)+2:di}\|_2^2)}
   \prod_{i=n-k+1}^{n}e^{-\frac{\|\h_{d(i-1)+2:di}\|_2^2}{2}}d\h_{d(i-1)+2:di}.\nonumber\\
\end{eqnarray}
We now analyze separately each of $I_{\lambda,1}(w_f,\epsilon_{st})$, $I_{\lambda,2}(w_f,\epsilon_{st})$, and $I_{\lambda,3}(w_f,\epsilon_{st})$.

\underline{1) \textbf{\emph{Computing $I_{\lambda,1}(w_f,\epsilon_{st})$}}}

From (\ref{eq:workww00}) we recall that $\H_i = (\h_{(i-1)d + 1}, \h_{(i-1)d + 2}, \ldots, \h_{id})^T, i = 1, 2, \ldots, n-k$, and from (\ref{eq:defhweak}) we recall that $\hw_i=\|\H_i\|_2, i = 1, 2, \ldots, n-k$. Then we have
\begin{eqnarray}\label{eq:sigmastIwf1}
  I_{\lambda,1}(w_f,\epsilon_{st}) &=& \frac{1}{\sqrt{2\pi}^{d(n-k)}}\int e^{-jw_f(w_{\lambda,1}-\cos^2(\epsilon_{st})\|\h_{1:d(n-k)}\|_2^2)}
   e^{-\frac{\|\h_{1:d(n-k)}\|_2^2}{2}}d\h_{1:d(n-k)} \nonumber\\
&=& \frac{1}{\sqrt{2\pi}^{d(n-k)}}\int e^{-jw_f(\sum_{i=1}^{n-k}\max(\hw_i-\lambda,0)^2-\cos^2(\epsilon_{st})\|\h_{1:d(n-k)}\|_2^2)}
   e^{-\frac{\|\h_{1:d(n-k)}\|_2^2}{2}}d\h_{1:d(n-k)} \nonumber\\
&=& \frac{1}{\sqrt{2\pi}^{d(n-k)}}\int e^{-jw_f(\sum_{i=1}^{n-k}\max(\hw_i-\lambda,0)^2-\cos^2(\epsilon_{st})\sum_{i=1}^{n-k}\hw_i^2)}
   e^{-\frac{\sum_{i=1}^{n-k}\hw_i^2}{2}}d\h_{1:d(n-k)} \nonumber\\
&=& \frac{1}{\sqrt{2\pi}^{d(n-k)}}\int\prod_{i=1}^{n-k}e^{-jw_f(\max(\hw_i-\lambda,0)^2-\cos^2(\epsilon_{st})\hw_i^2)}
   e^{-\frac{\hw_i^2}{2}}d\h_{(i-1)d+1:id} \nonumber\\
&=& \prod_{i=1}^{n-k}\frac{1}{\sqrt{2\pi}^{d}}\int e^{-jw_f(\max(\hw_i-\lambda,0)^2-\cos^2(\epsilon_{st})\hw_i^2)}
   e^{-\frac{\hw_i^2}{2}}d\h_{(i-1)d+1:id} \nonumber\\
&=& \left (\frac{1}{\sqrt{2\pi}^{d}}\int e^{-jw_f(\max(\hw_i-\lambda,0)^2-\cos^2(\epsilon_{st})\hw_i^2)}
   e^{-\frac{\hw_i^2}{2}}d\h_{(i-1)d+1:id}\right )^{n-k},1\leq i\leq n-k.
\end{eqnarray}
Using the distribution function for $\hw_i,1\leq i\leq n-k$, from (\ref{eq:ldpthm1perrub3}) we further have
\begin{eqnarray}\label{eq:sigmastIwf10}
  I_{\lambda,1}(w_f,\epsilon_{st})
&=& \left (\frac{1}{\sqrt{2\pi}^{d}}\int e^{-jw_f(\max(\hw_i-\lambda,0)^2-\cos^2(\epsilon_{st})\hw_i^2)}
   e^{-\frac{\hw_i^2}{2}}d\h_{(i-1)d+1:id}\right )^{n-k}\nonumber \\
&=& \left (\int_{\hw_i\geq 0}e^{-jw_f(\max(\sqrt{\hw_i}-\lambda,0)^2-\cos^2(\epsilon_{st})\hw_i)}
   \frac{2^{-d/2}}{\Gamma(d/2)}\hw_i^{d/2-1}e^{-\hw_i/2}d\hw_i\right )^{n-k}.
\end{eqnarray}

\underline{2) \textbf{\emph{Computing $I_{\lambda,2}(w_f,\epsilon_{st})$}}}

From (\ref{eq:defhweak}) we recall that $\hw_{n-k+1:n}=(-\h_{(n-k)d+1},-\h_{(n-k+1)d+1},\dots,-\h_{(n-1)d+1},)$, i.e. $\hw_i =-\h_{(i-1)d+1},i=n-k+1,n-k+2, \ldots, n$. Then we have
\begin{eqnarray}\label{eq:sigmastIwf2}
  I_{\lambda,2}(w_f,\epsilon_{st}) &=& \frac{1}{\sqrt{2\pi}^{k}}\int e^{-jw_f(w_{\lambda,2}-\cos^2(\epsilon_{st})
 \sum_{i=n-k+1}^{n}\h_{d(i-1)+1}^2)}
   \prod_{i=n-k+1}^{n}e^{-\frac{\h_{d(i-1)+1}^2}{2}}d\h_{d(i-1)+1} \nonumber\\
&=& \frac{1}{\sqrt{2\pi}^{k}}\int e^{-jw_f(\sum_{i=n-k+1}^{n}(\hw_i+\lambda)^2-\cos^2(\epsilon_{st})
 \sum_{i=n-k+1}^{n}\h_{d(i-1)+1}^2)}
   \prod_{i=n-k+1}^{n}e^{-\frac{\h_{d(i-1)+1}^2}{2}}d\h_{d(i-1)+1} \nonumber\\
&=& \frac{1}{\sqrt{2\pi}^{k}}\int \prod_{i=n-k+1}^{n} e^{-jw_f((-\h_{d(i-1)+1}+\lambda)^2-\cos^2(\epsilon_{st})
 \h_{d(i-1)+1}^2)}
   e^{-\frac{\h_{d(i-1)+1}^2}{2}}d\h_{d(i-1)+1} \nonumber\\
&=& \left ( \frac{1}{\sqrt{2\pi}}\int e^{-jw_f((-\h_{d(i-1)+1}+\lambda)^2-\cos^2(\epsilon_{st})
 \h_{d(i-1)+1}^2)}
   e^{-\frac{\h_{d(i-1)+1}^2}{2}}d\h_{d(i-1)+1}\right )^{k} \nonumber\\
   &=& \left ( exp\left (-jw_f\lambda^2+\left (2\lambda jw_f/\sqrt{1-2j w_f(\cos^2(\epsilon_{st})-1)}\right )^2/2\right )/\sqrt{1-2j w_f(\cos^2(\epsilon_{st})-1)}\right )^k.\nonumber \\
\end{eqnarray}


\underline{3) \textbf{\emph{Computing $I_{\lambda,3}(w_f,\epsilon_{st})$}}}

From (\ref{eq:defhweak}) we recall that $\hw_{i+k}=\|\h_{d(i-1)+2:di}\|_2, i =n-k+1,n-k+2,\ldots,n$. Then we have
\begin{eqnarray}\label{eq:sigmastIwf1}
  I_{\lambda,3}(w_f,\epsilon_{st}) &=& \frac{1}{\sqrt{2\pi}^{(d-1)(k)}}\int e^{-jw_f(w_{\lambda,3}-\cos^2(\epsilon_{st})
  \sum_{i=n-k+1}^{n}\|\h_{d(i-1)+2:di}\|_2^2)}
   \prod_{i=n-k+1}^{n}e^{-\frac{\|\h_{d(i-1)+2:di}\|_2^2}{2}}d\h_{d(i-1)+2:di} \nonumber\\
&=& \frac{1}{\sqrt{2\pi}^{(d-1)(k)}}\int e^{-jw_f(\sum_{i=n-k+1}^{n}\hw_{i+k}^2-\cos^2(\epsilon_{st})
  \sum_{i=n-k+1}^{n}\hw_{i+k}^2)}
   \prod_{i=n-k+1}^{n}e^{-\frac{\hw_{i+k}^2}{2}}d\h_{d(i-1)+2:di} \nonumber\\
&=& \frac{1}{\sqrt{2\pi}^{(d-1)(k)}}\int \prod_{i=n-k+1}^{n} e^{-jw_f(\hw_{i+k}^2-\cos^2(\epsilon_{st})
  \hw_{i+k}^2)}
   e^{-\frac{\hw_{i+k}^2}{2}}d\h_{d(i-1)+2:di} \nonumber\\
&=&  \left ( \frac{1}{\sqrt{2\pi}^{(d-1)}}\int e^{-jw_f(\hw_{i+k}^2-\cos^2(\epsilon_{st})
  \hw_{i+k}^2)}
   e^{-\frac{\hw_{i+k}^2}{2}}d\h_{d(i-1)+2:di} \right )^k,n-k+1\leq i\leq n.
\end{eqnarray}
Using the distribution function for $\hw_{i+k},n-k+1\leq i\leq n$, from (\ref{eq:ldpthm1perrub3}) we further have
\begin{eqnarray}\label{eq:sigmastIwf30}
  I_{\lambda,3}(w_f,\epsilon_{st})
&=&  \left ( \frac{1}{\sqrt{2\pi}^{(d-1)}}\int e^{-jw_f(\hw_{i+k}^2-\cos^2(\epsilon_{st})
  \hw_{i+k}^2)}
   e^{-\frac{\hw_{i+k}^2}{2}}d\h_{d(i-1)+2:di} \right )^k \nonumber \\
&=& \left (\int_{\hw_i\geq 0}e^{-jw_f(\hw_{i+k}-\cos^2(\epsilon_{st})
  \hw_{i+k})}
   \frac{2^{-(d-1)/2}}{\Gamma((d-1)/2)}\hw_i^{(d-1)/2-1}e^{-\hw_i/2}d\hw_i\right )^{k}\nonumber \\
&=& \frac{1}{\sqrt{1-2j(w_f(\cos^2(\epsilon_{st})-1))}^{(d-1)k}}.
\end{eqnarray}

We summarize the results from this subsection in the following theorem.
\begin{theorem}
Let $0<\epsilon_{st}< \frac{\pi}{2}$. Set $S_{w}'$ and $\zeta_{\epsilon_{st}}(S_{w}')$ as in (\ref{eq:defSwpr}) and (\ref{eq:findimSstepsmod}), respectively. Let $\sigma_{dn-1}(\cdot)$ be the standard Lebesque measure on the unit $S^{dn-1}$ sphere. Then
\begin{equation}\label{eq:thmsigmast1}
  \frac{\sigma_{dn-1}(\zeta_{\epsilon_{st}}(S_{w}'))}{\sigma_{dn-1}(S^{dn-1})}\leq 1-\max_{\lambda\geq 0}\lim_{\epsilon_{wf}\rightarrow 0_+}\int_{-\infty}^{\infty}\frac{1}{{2\pi j(w_f-j\epsilon_{wf})}}I_{\lambda,1}(w_f,\epsilon_{st})I_{\lambda,2}(w_f,\epsilon_{st})I_{\lambda,3}(w_f,\epsilon_{st})dw_f,
\end{equation}
where $I_{\lambda,1}(w_f,\epsilon_{st})$, $I_{\lambda,2}(w_f,\epsilon_{st})$, and $I_{\lambda,3}(w_f,\epsilon_{st})$ are as given in (\ref{eq:sigmastIwf10}), (\ref{eq:sigmastIwf2}), and (\ref{eq:sigmastIwf30}), respectively.
\label{thm:sigmast1}
\end{theorem}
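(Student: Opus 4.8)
\textit{Proof plan.} The theorem is essentially a bookkeeping statement that collects the chain of identities and the single inequality assembled in \eqref{eq:sigmast1}--\eqref{eq:sigmastIwf30}, so the plan is to re-assemble them in order. First I would recall that, by the rotational invariance of a standard Gaussian vector $\h\in\mathbf{R}^{dn}$, the normalized surface measure of the spherical neighborhood equals a probability, as in \eqref{eq:sigmast1}: $\frac{\sigma_{dn-1}(\zeta_{\epsilon_{st}}(S_{w}'))}{\sigma_{dn-1}(S^{dn-1})}=P(\max_{\w\in S_w'}\h^T\w/\|\h\|_2\geq\cos(\epsilon_{st}))$. Since $0<\epsilon_{st}<\pi/2$ forces $\cos(\epsilon_{st})>0$ and (almost surely) $w(\h,S_w')>0$, the event may be squared, giving $P((w(\h,S_w'))^2\geq\cos^2(\epsilon_{st})\|\h\|_2^2)$; then, by the same rotation argument used to pass from \eqref{eq:ldpprob3} to \eqref{eq:ldpprob4} (and as in \cite{StojnicCSetamBlock09}), one may replace $S_w'$ by the ``decoupled'' set $S_w$ of \eqref{eq:defSw}, arriving at \eqref{eq:sigmast3}.

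Next I would invoke the closed form $w(\h,S_w)=\min_{\lambda\geq0}w_\lambda(\h,S_w)$ from \eqref{eq:ldpwhSw}, with $w_\lambda$ as defined in \eqref{eq:sigmastldpwhSwlam}. For every fixed $\lambda\geq0$ one has the inclusion $\{(\min_{\lambda'\geq0}w_{\lambda'}(\h,S_w))^2\geq\cos^2(\epsilon_{st})\|\h\|_2^2\}\subseteq\{(w_\lambda(\h,S_w))^2\geq\cos^2(\epsilon_{st})\|\h\|_2^2\}$; passing to complements and optimizing over $\lambda$ yields \eqref{eq:sigmast4}, i.e. the upper bound $1-\max_{\lambda\geq0}P_\lambda$ with $P_\lambda=P((w_\lambda(\h,S_w))^2<\cos^2(\epsilon_{st})\|\h\|_2^2)$. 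This is the one place where the statement becomes an inequality rather than an equality.

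It then remains only to evaluate $P_\lambda$. I would write it as the $dn$-dimensional Gaussian integral \eqref{eq:sigmast5} against the indicator step function $h(\cdot)$, substitute the contour representation \eqref{eq:sigmasthfun} of $h$, and interchange the $\h$-integration with the $w_f$-integration and the $\epsilon_{wf}\to0_+$ limit. Because $w_{\lambda,1}$, $w_{\lambda,2}$, $w_{\lambda,3}$ (and the matching pieces of $\|\h\|_2^2$) depend on disjoint groups of coordinates of $\h$, the inner integral factors as $I_{\lambda,1}(w_f,\epsilon_{st})I_{\lambda,2}(w_f,\epsilon_{st})I_{\lambda,3}(w_f,\epsilon_{st})$, and each factor further factors over the $n-k$ (resp.\ $k$) blocks. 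Using the $\chi^2$ laws of the $\hw_i^2$ recorded in \eqref{eq:ldpthm1perrub3} and carrying out the elementary one-dimensional Gaussian integrals produces exactly the expressions \eqref{eq:sigmastIwf10}, \eqref{eq:sigmastIwf2}, \eqref{eq:sigmastIwf30}. Combining this with \eqref{eq:sigmast4} and \eqref{eq:sigmast6} gives \eqref{eq:thmsigmast1}.

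\textit{Main obstacle.} The combinatorial/algebraic part is routine reorganization; the delicate part is analytic: justifying that the step-function representation \eqref{eq:sigmasthfun} may legitimately be inserted and that the order of $\lim_{\epsilon_{wf}\to0_+}$, the $w_f$-integral, and the $dn$-dimensional Gaussian integral may be exchanged. Since the $w_f$-integrand decays only like $1/|w_f|$ at infinity, this interchange requires a genuine argument (dominated convergence after regularization by $\epsilon_{wf}$, or a contour deformation), and one must also verify that each $I_{\lambda,j}(w_f,\epsilon_{st})$ is well defined for real $w_f$ --- which holds because $\cos^2(\epsilon_{st})-1<0$ for $0<\epsilon_{st}<\pi/2$, keeping the relevant Gaussian integrals convergent. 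I expect this convergence/interchange verification to be the only real work; everything else follows by combining the displayed identities already established above.
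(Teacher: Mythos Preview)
Your proposal is correct and follows essentially the same route as the paper: the paper's proof simply says ``Follows from the discussion presented above,'' and that discussion is precisely the chain \eqref{eq:sigmast1}--\eqref{eq:sigmast4} together with the factorization \eqref{eq:sigmast6}--\eqref{eq:sigmastIwf30} that you outline. Your identification of the interchange of the $\epsilon_{wf}\to0_+$ limit, the $w_f$-integral, and the Gaussian integral as the only non-formal step is accurate, and in fact goes slightly beyond what the paper makes explicit.
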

\begin{proof}
  Follows from the discussion presented above.
\end{proof}
To give an idea how close to the true values are the estimates for $\frac{\sigma_{dn-1}(\zeta_{\epsilon_{st}}(S_{w}'))}{\sigma_{dn-1}(S^{dn-1})}$  obtained through Theorem \ref{thm:sigmast1} we below in Tables \ref{tab:sigmastubtab3d2} and \ref{tab:sigmastubtab3d10} present
the results for two different block lengths, $d=2$ and $d=10$. As the tables indicate, Theorem \ref{thm:sigmast1} gives values that are indeed very close to the exact ones. We also add that in this introductory paper, we chose to present a simple strategy that already works fairly well. One can actually improve on it but that requires a bit of additional technical effort (similar is also true for some of the considerations from the following section). To ensure that here we preserve the elegance of the introductory concepts we will present those considerations in a separate paper.
\begin{table}[h]
\caption{Simulated $\frac{\sigma_{dn-1}(\zeta_{\epsilon_{st}}(S_{w}'))}{\sigma_{dn-1}(S^{dn-1})}$ and its upper bound obtained through Theorem \ref{thm:sigmast1}; $d=2$, $k=6$, $n=18$}\vspace{.1in}
\hspace{-0in}\centering
\begin{tabular}{||c||c|c|c|c|c||}\hline\hline
$\epsilon_{st}$                                      & $ 0.5000 $ & $ 0.6000 $ & $ 0.7000 $ & $ 0.8000 $ & $ 0.9000 $\\ \hline\hline
$\lambda$                                            & $ 0.7152 $ & $ 0.8441 $ & $ 0.9613 $ & $ 1.0858 $ & $ 1.2015 $\\ \hline
$\frac{\sigma_{dn-1}(\zeta_{\epsilon_{st}}(S_{w}'))}{\sigma_{dn-1}(S^{dn-1})}$ -- upper bound& $ \bl{\mathbf{0.1465}} $ & $ \bl{\mathbf{0.3538}} $ & $ \bl{\mathbf{0.6145}} $ & $ \bl{\mathbf{0.8325}} $ & $ \bl{\mathbf{0.9505}} $\\ \hline
$\frac{\sigma_{dn-1}(\zeta_{\epsilon_{st}}(S_{w}'))}{\sigma_{dn-1}(S^{dn-1})}$ -- simulated & $ \prp{\mathbf{0.1388}} $ & $ \prp{\mathbf{0.3410}} $ & $ \prp{\mathbf{0.5980}} $ & $ \prp{\mathbf{0.8261}} $ & $ \prp{\mathbf{0.9471}} $\\ \hline\hline
\end{tabular}
\label{tab:sigmastubtab3d2}
\end{table}
\begin{table}[h]
\caption{Simulated $\frac{\sigma_{dn-1}(\zeta_{\epsilon_{st}}(S_{w}'))}{\sigma_{dn-1}(S^{dn-1})}$ and its upper bound obtained through Theorem \ref{thm:sigmast1}; $d=10$, $k=6$, $n=18$}\vspace{.1in}
\hspace{-0in}\centering
\begin{tabular}{||c||c|c|c|c|c||}\hline\hline
$\epsilon_{st}$                                      & $ 0.6000 $ & $ 0.6500 $ & $ 0.7000 $ & $ 0.7500 $ & $ 0.8000 $\\ \hline\hline
$\lambda$                                            & $ 1.8045 $ & $ 1.9119 $ & $ 2.0291 $ & $ 2.1878 $ & $ 2.2659 $\\ \hline
$\frac{\sigma_{dn-1}(\zeta_{\epsilon_{st}}(S_{w}'))}{\sigma_{dn-1}(S^{dn-1})}$ -- upper bound& $ \bl{\mathbf{0.0619}} $ & $ \bl{\mathbf{0.1877}} $ & $ \bl{\mathbf{0.4391}} $ & $ \bl{\mathbf{0.7254}} $ & $ \bl{\mathbf{0.9122}} $\\ \hline
$\frac{\sigma_{dn-1}(\zeta_{\epsilon_{st}}(S_{w}'))}{\sigma_{dn-1}(S^{dn-1})}$ -- simulated & $ \prp{\mathbf{0.0482}} $ & $ \prp{\mathbf{0.1727}} $ & $ \prp{\mathbf{0.4346}} $ & $ \prp{\mathbf{0.7168}} $ & $ \prp{\mathbf{0.9085}} $\\ \hline\hline
\end{tabular}
\label{tab:sigmastubtab3d10}
\end{table}

\subsubsection{Determining $v_{M+2k_{st}+1}(S_w')$}
\label{sec:vst}

As discussed above, to determine $v_{M+2k_{st}+1}(S_w')$ (and ultimately $P_{err}$ in (\ref{eq:findimperrcrft})) one in principle can utilize the spherical Steiner formula (\ref{eq:findimsphSteiner}). That would of course be possible if one can determine $\sigma_{dn-1}(\zeta_{\epsilon_{st}}(S_{w}'))$ and if the system in (\ref{eq:findimsphSteiner}) can be solved. To solve the system in (\ref{eq:findimsphSteiner}) we employed the following standard $(\lambda_{reg},p_{reg},q_{reg})$ regression
\begin{eqnarray}\label{eq:detvst1}
  \min_{v_{i_{st}}(S_w')\geq 0} & & \|\sigma(S_w')-G\v(S_w')\|_{p_{reg}}+\lambda_{reg}\|\v(S_w')\|_{q_{reg}} \nonumber \\
  \mbox{subjec to} & &  \sigma(S_w')-G\v(S_w')\geq \0\nonumber \\
                   & & \sum_{i_{st}=0}^{dn-2}v_{i_{st}}(S_w')=1-v_{dn-1}(S_w')\nonumber \\
                   & & \sum_{i_{st}=0}^{dn-1}(-1)^{i_{st}}v_{i_{st}}(S_w')=0,
\end{eqnarray}
where
\begin{eqnarray}\label{eq:detvst2}
  \sigma(S_w') &=& (\sigma_{dn-1}(\zeta_{\epsilon_{st}^{(1)}}(S_{w}')),\sigma_{dn-1}(\zeta_{\epsilon_{st}^{(2)}}(S_{w}')),\dots,\sigma_{dn-1}(\zeta_{\epsilon_{st}^{(n_{\epsilon_{st}})}}(S_{w}')))^T \nonumber \\
  \v(S_w') &=& (v_{0}(S_w'),v_{1}(S_w'),\dots,v_{dn-2}(S_w'))^T \nonumber \\
  G_{n_{st},i_{st}} &=& g_{n_{st},i_{st}},0\leq i_{st}\leq nd-2,1\leq n_{st}\leq n_{\epsilon_{st}}\nonumber \\
  v_{dn-1}(S_w') &=& \frac{\sigma_{dn-1}(S_w')}{\sigma_{dn-1}(S^{dn-1})}.
\end{eqnarray}
Following considerations from earlier parts of this section one then has
\begin{equation}\label{eq:detvst3}
  v_{dn-1}(S_w') = \frac{\sigma_{dn-1}(S_w')}{\sigma_{dn-1}(S^{dn-1})}=P(\sum_{i=1}^{n}\hw_i\geq 0).
\end{equation}
Similarly to (\ref{eq:sigmast6}) and (\ref{eq:sigmast7}) we have
\begin{equation}\label{eq:detvst4}
v_{dn-1}(S_w')
   =
   \lim_{\epsilon_{wf}\rightarrow 0_+}\int_{-\infty}^{\infty}\frac{1}{{2\pi j(w_f-j\epsilon_{wf})}}I_{\lambda,1}^v(w_f)I_{\lambda,2}^v(w_f)dw_f,
\end{equation}
where
\begin{eqnarray}\label{eq:detvst5}
  I_{\lambda,1}^v(w_f) &=& \frac{1}{\sqrt{2\pi}^{d(n-k)}}\int e^{jw_f(\|\h_{1:d(n-k)}\|_2)}
   e^{-\frac{\|\h_{1:d(n-k)}\|_2^2}{2}}d\h_{1:d(n-k)}\nonumber \\
   &=& \left (\int_{\hw_i\geq 0}e^{jw_f\sqrt{\hw_i}}
   \frac{2^{-d/2}}{\Gamma(d/2)}\hw_i^{d/2-1}e^{-\hw_i/2}d\hw_i\right )^{n-k} \nonumber\\
  I_{\lambda,2}^v(w_f) &=& \frac{1}{\sqrt{2\pi}^{k}}\int e^{jw_f
 \sum_{i=n-k+1}^{n}\h_{d(i-1)+1}}
   \prod_{i=n-k+1}^{n}e^{-\frac{\h_{d(i-1)+1}^2}{2}}d\h_{d(i-1)+1}=e^{-kw_f^2/2}.\nonumber\\
\end{eqnarray}

In Figures \ref{fig:findimd2d10crft} and \ref{fig:findimd2scaledcrft} we present the results that we obtained based on the above considerations. Clear;y, the figures are designed to showcase the effect that both, increasing the block-length as well as increasing the number of blocks have on the probability of error and its various estimates that we provide. We chose $\epsilon_{st}^{(i)}\in[0.4,1]$ and $\epsilon_{st}^{(i)}=0.4+\Delta_{\epsilon_{st}}i,i\in\{1,2,\dots\}$. For $d=2$, $k=6$, $n=18$ we chose $\Delta_{\epsilon_{st}}=0.01$, for $d=2$, $k=24$, $n=72$ we chose $\Delta_{\epsilon_{st}}=0.004$, and for $d=10$, $k=6$, $n=18$ we chose $\Delta_{\epsilon_{st}}=0.002$. There are several quantities that are plotted in each figure: $p_{sim}$ which is a simulated probability of error; $p_{num}$ which is obtained based on the above considerations with $\sigma_{dn-1}(\zeta_{\epsilon_{st}}(S_{w}'))$ simulated; $p_{apx}$ which is obtained fully analytically based on the theoretical upper bound on $\sigma_{dn-1}(\zeta_{\epsilon_{st}}(S_{w}'))$ derived in the previous section and the above regression mechanism to estimate $\v(S_w')$. Finally we present a fully analytical upper bound on the probability of error $p_{ub}^{(ldp)}$ obtained using results of Theorem \ref{thm:ldp1} and recognizing
that $\|\g\|_2^2$ in Theorem \ref{thm:ldp1} is a $\chi$-square random variable. As can be seen from figures, $p_{sim}$ and $p_{num}$ are almost identical (in fact they should be identical if the simulation results were fully exact and the system (\ref{eq:findimsphSteiner}) were numerically well-posed). One can also note that analytical approximation $p_{apx}$ is also very close to the simulated values (one can of course play with the regression parameters and the results would be a bit different; we selected $\lambda_{reg}=1$, $p_{reg}=2$, and $q_{reg}=2$). Finally, the fully analytical upper bound $p_{ub}^{(ldp)}$ is in our view pretty close to the simulated values as well given how daunting task we actually had in front of us at the beginning. We should emphasize once again that the finite dimensional situation considered here is pretty much the only scenario where the high-dimensional geometry has a bit of an edge compared to the probabilistic theories that we have developed and used to obtain $p_{ub}^{(ldp)}$. Also, as block-length grows, the gap is even smaller and as figures indicate the transition zone is shrinking (this is of course to be expected based even just on a mere fact that the system dimensions are larger in the right side portions of the figures; moreover, this is also the key reason why we selected the smallest block length, i.e. $d=2$, basically ensuring that the scenario where the gap is the largest is showcased as well).


\begin{figure}[htb]
\begin{minipage}[b]{.5\linewidth}
\centering
\centerline{\epsfig{figure=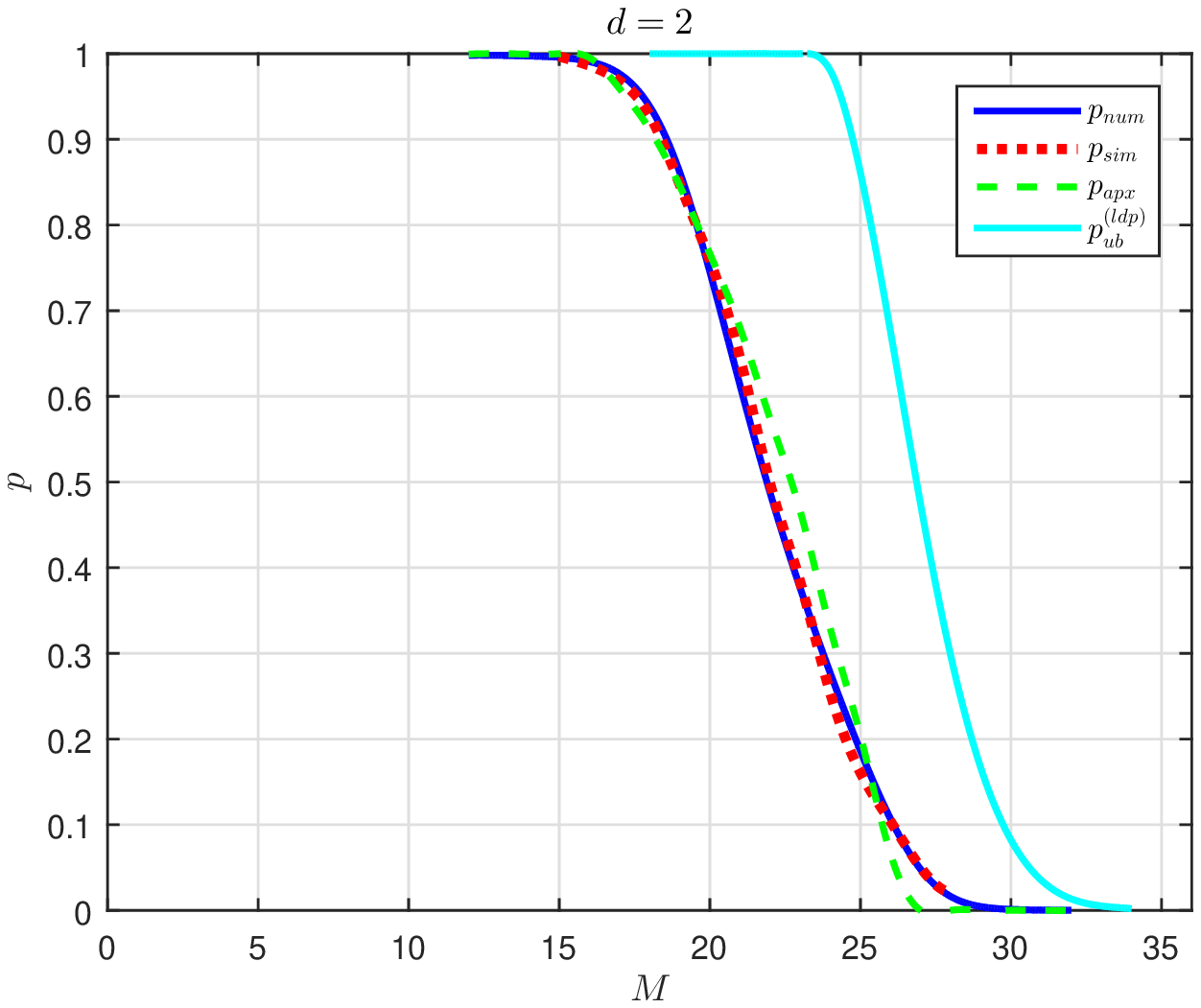,width=9cm,height=7cm}}
\end{minipage}
\begin{minipage}[b]{.5\linewidth}
\centering
\centerline{\epsfig{figure=,width=9cm,height=7cm}}
\end{minipage}
\caption{Probability of error estimates, $\ell_2/\ell_1$-optimization; $k=6$, $n=18$}
\label{fig:findimd2d10crft}
\end{figure}


\begin{figure}[htb]
\begin{minipage}[b]{.5\linewidth}
\centering
\centerline{\epsfig{figure=crftl1bld2prerrallPAP.eps,width=9cm,height=7cm}}
\end{minipage}
\begin{minipage}[b]{.5\linewidth}
\centering
\centerline{\epsfig{figure=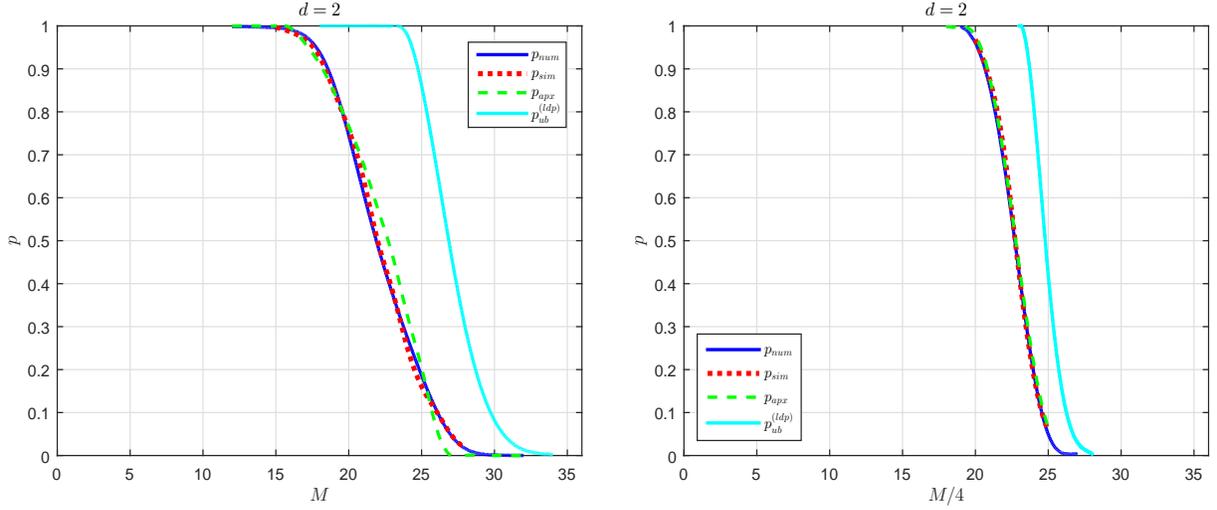,width=9cm,height=7cm}}
\end{minipage}
\caption{Probability of error estimates, $\ell_2/\ell_1$-optimization; left -- $k=6$, $n=18$; right -- $k=24$, $n=72$}
\label{fig:findimd2scaledcrft}
\end{figure}

\subsection{Probabilistic approach}
\label{sec:probapproach}

As we stated above, in Figures \ref{fig:findimd2d10crft} and \ref{fig:findimd2scaledcrft}, a fully analytical upper bound on the probability of error (denoted by $p_{ub}^{(ldp)}$ and obtained using results of Theorem \ref{thm:ldp1}) was plotted as well. In this subsection we will provide another way to upper-bound the probability of error. It will turn out that the results one can obtain through such an approach are similar to $p_{ub}^{(ldp)}$. However, the technique that we will present is of independent interest and we view it as a fairly general and useful tool.

We start things off by recalling on (\ref{eq:ldpprob})
\begin{equation}
P_{err}= P(\min_{\w\in S_w'}\|A\w\|_2\leq 0)=P(\max_{\w\in S_w'}\min_{\|\y\|_2=1}(\y^T A\w )\geq 0).
\label{eq:probapproachldpprob}
\end{equation}
Relying on the same trick that was utilized in (\ref{eq:ldpproblower}) we then have
\begin{equation}
P_{err}= P(\min_{\w\in S_w'}\|A\w\|_2\leq 0)=P(\max_{\w\in S_w'}\min_{\|\y\|_2=1}(\y^T A\w )\geq 0)\leq
P(\max_{\w\in S_w'}\min_{\|\y\|_2=1}\y^T A\w+(g-t_1)\geq 0)/P(g\geq t_1).
\label{eq:probapproachldpprob1}
\end{equation}
Adjusting Lemma \ref{lemma:unsignedlemmalower} (essentially its original form from \cite{Gordon85}) one then  has
\begin{eqnarray}
P_{err}&\leq&
P(\max_{\w\in S_w'}\min_{\|\y\|_2=1}\y^T A\w+(g-t_1)\geq 0)/P(g\geq t_1)\nonumber \\
&\leq&
P(\max_{\w\in S_w'}\min_{\|\y\|_2=1}(\y^T \g+\h^T\w-t_1)\geq 0)/P(g\geq t_1)\nonumber \\
&\leq&
P(-\|\g\|_2+w(\h,S_w')-t_1\geq 0)/P(g\geq t_1)\nonumber \\
&=&
P(-\|\g\|_2+w(\h,S_w)-t_1\geq 0)/P(g\geq t_1)\nonumber \\
&\leq&
\min_{\lambda\geq 0}P(-\|\g\|_2+w_{\lambda}(\h,S_w)-t_1\geq 0)/P(g\geq t_1).\nonumber \\
&\leq&
\min_{\lambda\geq 0}P_{\lambda,ub}/P(g\geq t_1),\nonumber \\
\label{eq:probapproachldpprob2}
\end{eqnarray}
where clearly we set
\begin{equation}\label{eq:probapproachldpprob3}
  P_{\lambda,ub} =  P(-\|\g\|_2+w_{\lambda}(\h,S_w)-t_1\geq 0).
\end{equation}
We then make use of the indicator step function similarly as in (\ref{eq:sigmast5})
\begin{eqnarray}\label{eq:probapproachsigmast5}
P_{\lambda,ub}& = & P(-\|\g\|_2+w_{\lambda}(\h,S_w)-t_1\geq 0)\nonumber \\
& = &
 \frac{1}{\sqrt{2\pi}^{M}} \frac{1}{\sqrt{2\pi}^{dn}}\int_{\g,\h}h(-\|\g\|_2+w_{\lambda}(\h,S_w)-t_1)e^{-\frac{\|\h\|^2+\|\g\|^2}{2}}d\h d\g\nonumber\\
& = &
 \frac{1}{\sqrt{2\pi}^{M}} \frac{1}{\sqrt{2\pi}^{dn}}\int_{\|\g\|_2\geq \max(-t_1,0),\h}h((w_{\lambda}(\h,S_w))^2-(t_1+\|\g\|_2)^2)e^{-\frac{\|\h\|^2+\|\g\|^2}{2}}d\h d\g\nonumber\\
 &&+
 \frac{1}{\sqrt{2\pi}^{M}}\int_{\|\g\|_2\leq \max(-t_1,0)}e^{-\frac{\|\g\|^2}{2}}d\g\nonumber \\
& = &
  P_{\lambda,ub}^{(1)} +\left (\int_{0\leq z_g\leq (\max(-t_1,0))^2}
   \frac{2^{-M/2}}{\Gamma(M/2)}z_g^{M/2-1}e^{-z_g/2}dz_g\right ),
\end{eqnarray}
where
\begin{equation}\label{eq:probapproachsigmast51}
  P_{\lambda,ub}^{(1)}=\frac{1}{\sqrt{2\pi}^{M}} \frac{1}{\sqrt{2\pi}^{dn}}\int_{\|\g\|_2\geq \max(-t_1,0),\h}h((w_{\lambda}(\h,S_w))^2-(t_1+\|\g\|_2)^2)e^{-\frac{\|\h\|^2+\|\g\|^2}{2}}d\h d\g.
\end{equation}
Combining (\ref{eq:sigmasthfun}), (\ref{eq:probapproachsigmast5}), and (\ref{eq:probapproachsigmast51}) we then have
\begin{eqnarray}\label{eq:probapproachsigmast6}
P_{\lambda,ub}^{(1)} &=&  \frac{1}{\sqrt{2\pi}^{M}} \frac{1}{\sqrt{2\pi}^{dn}}\int_{\|\g\|_2\geq \max(-t_1,0),\h}h((w_{\lambda}(\h,S_w))^2-(t_1+\|\g\|_2)^2))e^{-\frac{\|\h\|_2^2+\|\g\|^2}{2}}d\h d\g\nonumber\\
   &=& \frac{1}{\sqrt{2\pi}^{M}} \frac{1}{\sqrt{2\pi}^{dn}}\int_{\|\g\|_2\geq \max(-t_1,0),\h}
   \lim_{\epsilon_{wf}\rightarrow 0_+}\int_{-\infty}^{\infty}\frac{e^{jw_f((w_\lambda(\h,S_w))^2-(t_1+\|\g\|_2)^2)}}{2\pi j(w_f-j\epsilon_{wf})}dw_f
   e^{-\frac{\|\h\|_2^2+\|\g\|_2^2}{2}}d\h d\g \nonumber\\
   &=&
   \lim_{\epsilon_{wf}\rightarrow 0_+}\int_{-\infty}^{\infty}\frac{1}{{2\pi j(w_f-j\epsilon_{wf})}}\int_{\|\g\|_2\geq \max(-t_1,0),\h}e^{jw_f(w_{\lambda,1}+w_{\lambda,2}+w_{\lambda,3}-(t_1+\|\g\|_2)^2)}
   \frac{e^{-\frac{\|\h\|_2^2+\|\g\|_2^2}{2}}}{\sqrt{2\pi}^{M+dn}}d\h d\g dw_f \nonumber\\
   &=&
   \lim_{\epsilon_{wf}\rightarrow 0_+}\int_{-\infty}^{\infty}\frac{1}{{2\pi j(w_f-j\epsilon_{wf})}}I_{\lambda,1}(-w_f,\pi/2)I_{\lambda,2}(-w_f,\pi/2)I_{\lambda,3}(-w_f,\pi/2)I_{\lambda,4}(w_f,t_1)dw_f,
\end{eqnarray}
where
\begin{eqnarray}\label{eq:probapproachsigmast7}
  I_{\lambda,4}(w_f,t_1) &=& \frac{1}{\sqrt{2\pi}^{M}}\int_{\|\g\|_2\geq \max(-t_1,0)} e^{-jw_f(t_1+\|\g\|_2)^2}
   e^{-\frac{\|\g\|_2^2}{2}}d\g\nonumber\\
   &=&\left (\int_{z_g\geq (\max(-t_1,0))^2}e^{-jw_f(t_1+\sqrt{z_g})^2}
   \frac{2^{-M/2}}{\Gamma(M/2)}z_g^{M/2-1}e^{-z_g/2}d\z_g\right ).
\end{eqnarray}
We summarize the above discussion in the following theorem.
\begin{theorem}
Assume the setup of Theorem \ref{thm:ldp1}. Accordingly, let $P_{err}$ be the probability that the solution of (\ref{eq:l2l1}) is not the $k$-block-sparse solution of (\ref{eq:system}). Then
\begin{eqnarray}
P_{err}&\leq &   \min_{t_1}\min_{\lambda\geq 0} (\lim_{\epsilon_{wf}\rightarrow 0_+}\int_{-\infty}^{\infty}\frac{1}{{2\pi j(w_f-j\epsilon_{wf})}}I_{\lambda,1}(-w_f,\pi/2)I_{\lambda,2}(-w_f,\pi/2)I_{\lambda,3}(-w_f,\pi/2)I_{\lambda,4}(w_f,t_1)dw_f\nonumber\\
&&+(\int_{0\leq z_g\leq (\max(-t_1,0))^2}
   \frac{2^{-M/2}}{\Gamma(M/2)}z_g^{M/2-1}e^{-z_g/2}dz_g ))/P(g\geq t_1)),
\label{eq:probapproachldpthm1perrub1}
\end{eqnarray}
where $I_{\lambda,1}(\cdot,\cdot)$, $I_{\lambda,2}(\cdot,\cdot)$, $I_{\lambda,3}(\cdot,\cdot)$, and $I_{\lambda,4}(\cdot,\cdot)$ are as given in (\ref{eq:sigmastIwf10}), (\ref{eq:sigmastIwf2}), (\ref{eq:sigmastIwf30}), and (\ref{eq:probapproachsigmast7}) respectively.
\label{thm:probapproachldp1}
\end{theorem}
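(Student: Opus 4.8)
The plan is to assemble the chain of inequalities developed in the paragraphs preceding the statement into a single closed bound, so the proof is essentially organizational. First I would start from the two equivalent expressions for $P_{err}$ in (\ref{eq:probapproachldpprob}) and introduce the auxiliary standard normal $g$ together with a free threshold $t_1$, exactly as in the lower-tail derivation around (\ref{eq:ldpproblower}): restricting to the event $\{g\geq t_1\}$ and dividing by its probability gives $P_{err}\leq P(\max_{\w\in S_w'}\min_{\|\y\|_2=1}\y^T A\w+(g-t_1)\geq 0)/P(g\geq t_1)$. Then I would invoke Gordon's comparison inequality in the form of Lemma \ref{lemma:unsignedlemmalower}, with the gap function $\zeta_{\w,\nu,t_1,\X}$ specialized so that the surviving inner expression is $\y^T\g+\h^T\w-t_1$ (the $\nu$-free specialization of the lemma), which replaces the Gaussian bilinear form $\y^TA\w$ by the decoupled $\y^T\g+\h^T\w$. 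Carrying out the trivial inner optimizations $\min_{\|\y\|_2=1}\y^T\g=-\|\g\|_2$ and $\max_{\w\in S_w'}\h^T\w=w(\h,S_w')$, and then using the rotational-invariance reduction of \cite{StojnicCSetamBlock09} to pass from $w(\h,S_w')$ to $w(\h,\Sw)$, yields $P_{err}\leq P(-\|\g\|_2+w(\h,\Sw)-t_1\geq 0)/P(g\geq t_1)$, as recorded in (\ref{eq:probapproachldpprob2}).

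Next I would use the closed form $w(\h,\Sw)=\min_{\lambda\geq 0}w_\lambda(\h,\Sw)$ from (\ref{eq:ldpwhSw}) and (\ref{eq:sigmastldpwhSwlam}): since $w(\h,\Sw)\leq w_\lambda(\h,\Sw)$ for every $\lambda\geq 0$, the event in the numerator is contained in the corresponding event with $w_\lambda$, so $P_{err}\leq \min_{\lambda\geq 0}P_{\lambda,ub}/P(g\geq t_1)$ with $P_{\lambda,ub}=P(-\|\g\|_2+w_\lambda(\h,\Sw)-t_1\geq 0)$. The remaining work is to evaluate $P_{\lambda,ub}$ analytically. I would write it as a Gaussian integral against the indicator step function $h(\cdot)$ and split the domain according to whether $\|\g\|_2\leq\max(-t_1,0)$, on which the inequality holds deterministically and contributes the $\chi^2$ tail term $\int_{0}^{(\max(-t_1,0))^2}\frac{2^{-M/2}}{\Gamma(M/2)}z_g^{M/2-1}e^{-z_g/2}dz_g$, or $\|\g\|_2\geq\max(-t_1,0)$, on which the argument of $h$ may be squared to $(w_\lambda(\h,\Sw))^2-(t_1+\|\g\|_2)^2$, giving $P_{\lambda,ub}^{(1)}$ as in (\ref{eq:probapproachsigmast5}).

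For $P_{\lambda,ub}^{(1)}$ I would substitute the Fourier representation (\ref{eq:sigmasthfun}) of $h$ and exchange the order of integration. Because $w_\lambda(\h,\Sw)^2=w_{\lambda,1}+w_{\lambda,2}+w_{\lambda,3}$ decomposes into sums over disjoint, mutually independent groups of coordinates of $\h$ with the $\chi^2$-type laws of (\ref{eq:ldpthm1perrub3}), while $\|\g\|_2^2$ involves only the independent vector $\g$, the inner Gaussian integral factorizes as $I_{\lambda,1}(-w_f,\pi/2)I_{\lambda,2}(-w_f,\pi/2)I_{\lambda,3}(-w_f,\pi/2)I_{\lambda,4}(w_f,t_1)$: the first three factors are precisely the integrals of (\ref{eq:sigmastIwf10}), (\ref{eq:sigmastIwf2}), (\ref{eq:sigmastIwf30}) evaluated at spectral variable $-w_f$ and with $\cos^2(\epsilon_{st})$ set to $0$ (hence the second argument $\pi/2$, since $\cos(\pi/2)=0$, matching the absence of a $\cos^2$ term here), and $I_{\lambda,4}$ is the new factor from the $(t_1+\|\g\|_2)^2$ contribution of $\g$, given in (\ref{eq:probapproachsigmast7}). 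Collecting the pieces, taking the limit $\epsilon_{wf}\to 0_+$, and finally minimizing over $\lambda\geq 0$ and $t_1$ produces (\ref{eq:probapproachldpthm1perrub1}).

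I expect the one genuinely delicate point to be the invocation of Gordon's lemma: one must verify that the $\max_{\w}\min_{\|\y\|_2=1}$ structure, augmented by the additive $g-t_1$ over the constraint set $S_w'$, actually fits the hypotheses of Lemma \ref{lemma:unsignedlemmalower} (equivalently, that $\zeta_{\w,\nu,t_1,\X}$ can be chosen to leave behind exactly $\y^T\g+\h^T\w-t_1$), and that the resulting inequality points in the direction needed to \emph{upper}-bound $P_{err}$. Everything downstream of that — the trivial inner optimizations, the $S_w'\to\Sw$ reduction via rotational invariance, the step-function/Fourier manipulation, and the factorization into the $I_{\lambda,j}$'s — is bookkeeping of the same kind already performed for Theorem \ref{thm:sigmast1} and Theorem \ref{thm:ldp1}, and should go through without surprises.
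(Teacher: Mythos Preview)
Your proposal is correct and follows the paper's argument step for step: the paper's own proof reads ``Follows from the previous discussion,'' and the discussion preceding the theorem is exactly the chain you outline --- the $(g,t_1)$ augmentation as in (\ref{eq:ldpproblower}), Gordon's comparison via Lemma \ref{lemma:unsignedlemmalower}, the $S_w'\to\Sw$ rotational reduction, the $w(\h,\Sw)\leq w_\lambda(\h,\Sw)$ step, the $\|\g\|_2\lessgtr\max(-t_1,0)$ split, and the Fourier/factorization into the $I_{\lambda,j}$'s with $\epsilon_{st}=\pi/2$. Your flagging of the Gordon step as the one point requiring care is also in line with the paper's only nontrivial remark (``Adjusting Lemma \ref{lemma:unsignedlemmalower}\dots'').
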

\begin{proof}
  Follows from the previous discussion.
\end{proof}
In Figure \ref{fig:findimd2addgub} we present the results that one can obtain based on the above theorem. Quantity denoted by $p_{ub}^{(ag)}$ is the upper bound on $P_{err}$ that we computed utilizing Theorem \ref{thm:probapproachldp1}. As can be seen (and as we hinted at the beginning of this subsection) the results are comparable to $p_{ub}^{(ldp)}$ (as expected this is even more emphasized as the system dimensions grow). However, the strategy that we developed above is a very powerful tool and we wanted to present it on its own without too much emphasis on the concrete values that it provides for the upper estimate of $P_{err}$.


\begin{figure}[htb]
\begin{minipage}[b]{.5\linewidth}
\centering
\centerline{\epsfig{figure=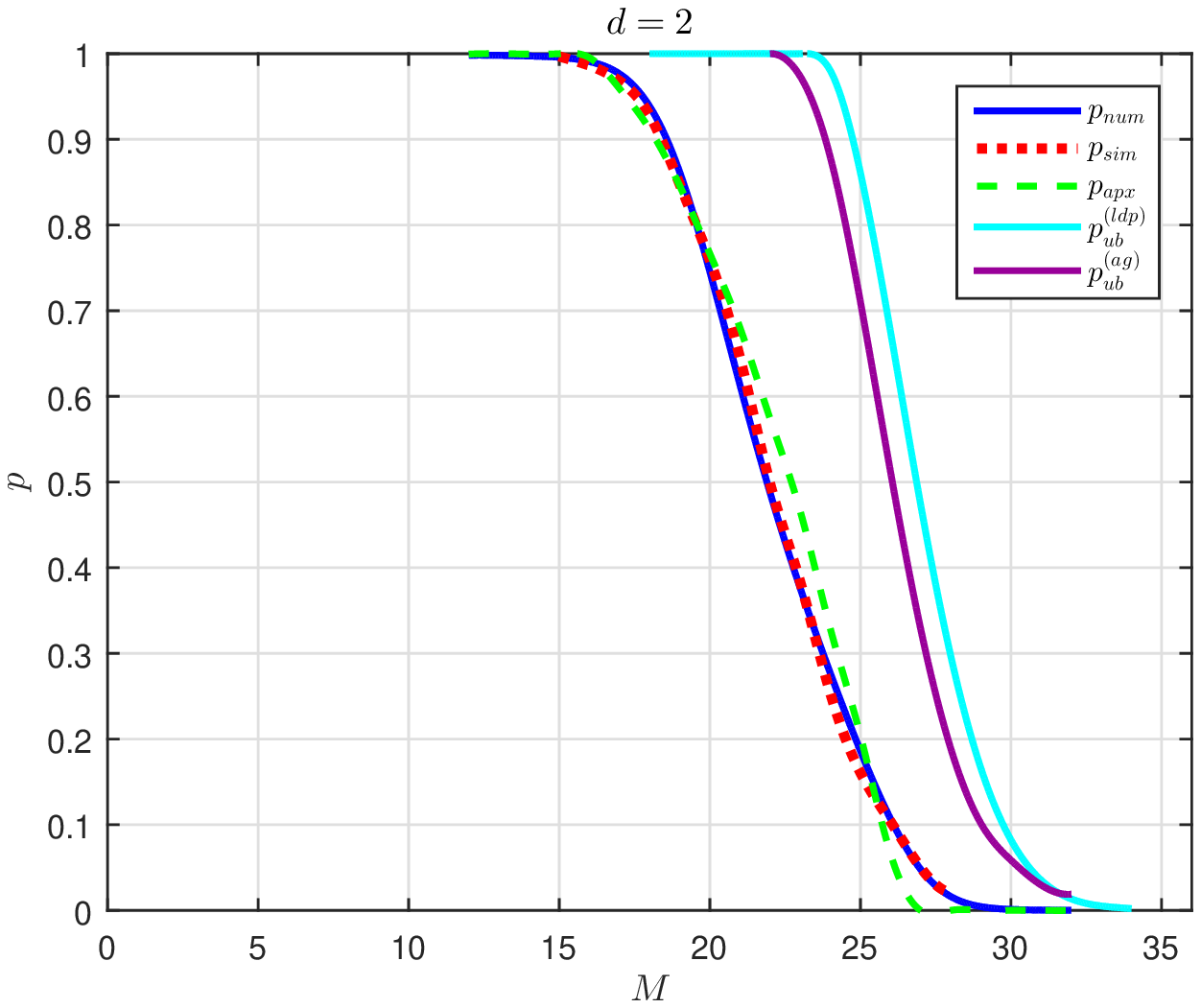,width=9cm,height=7cm}}
\end{minipage}
\begin{minipage}[b]{.5\linewidth}
\centering
\centerline{\epsfig{figure=,width=9cm,height=7cm}}
\end{minipage}
\caption{Probability of error estimates, $\ell_2/\ell_1$-optimization; left -- $k=6$, $n=18$; right -- $k=24$, $n=72$}
\label{fig:findimd2addgub}
\end{figure}

\section{Conclusion}
\label{sec:conc}

The main topic of interest in this paper are random under-determined systems of linear equations with block-sparse solutions. A particular line of study that focuses on a special class of the so called $\ell_2/\ell_1$-optimization algorithms was considered. We first revisited a few earlier results that relate to an asymptotic analysis of such algorithms when employed for solving random linear systems with block-sparse solutions. In particular, we first revisited the so-called phase-transition phenomena. These phenomena provide the exact characterization of the so-called breaking points, i.e. of the points where the algorithms sharply transition from being perfectly reliable to being unable to solve problems at hand. In a series of earlier works we introduced a systematic way for studying such a phenomena that relies on a purely probabilistic approach.

We then went a bit further and created a completely new concept that we in a way connected to the large deviation theory. Namely, we introduced a large deviation type of considerations for the probabilities of error which in essence deal with the tail behavior of the phase-transition curves. These are again by their nature asymptotic and we were able to show that the mechanisms we designed earlier can in fact be restructured so that they can handle such a type of problems as well.

In addition to the asymptotic considerations, we also raised the level a bit and considered finite dimensional scenarios as well (of course these being the ultimate considerations one would like to have). We then showed that the probabilistic theories that we have built can in fact produce a solid finite dimensional estimates as well.

Finally, we also considered a high-dimensional integral geometry approach as an alternative tool for the types of the analyses of interest in this paper. We introduced several novel technical steps that enable such an approach to work which eventually enabled us to obtain excellent estimates for success/failure of the $\ell_2/\ell_1$-optimization when used for solving linear systems with block-sparse solutions.

Of course, as is often the case with the type of results that we showcased here, one can then continue further and consider various other aspects/extensions of the algorithms/problems at hand and adjust the techniques introduced here and in a few of our earlier works so that they fit those problems as well. Such adjustments are fairly standard and for a few particularly interesting problems we will in several companion papers present how one can do them and what kind of results one can eventually obtain through them.

\begin{singlespace}
\bibliographystyle{plain}
\bibliography{blspfinnldpasym1Refs}
\end{singlespace}

\end{document}